\declaretheorem[name=Theorem,numberwithin=section]{thm}
\newtheorem*{thm*}{Theorem}
\newtheorem{cor}[thm]{Corollary}
\newtheorem{prop}[thm]{Proposition}
\theoremstyle{definition}
\newtheorem*{ack}{Acknowledgements}
\theoremstyle{remark}
\newcommand{\QQ}{\mathbb{Q}}
\newcommand{\HH}{\mathcal{H}}
\newcommand{\CC}{\mathbb{C}}
\newcommand{\OO}{\mathcal{O}}
\newcommand{\ZZ}{\mathbb{Z}}
\newcommand{\PP}{\mathbb{P}}
\newcommand{\Pic}{\text{Pic}}
\newcommand{\codim}{\text{codim}}
\newcommand{\Mg}{\mathcal{M}_g}
\newcommand{\Mgn}{\mathcal{M}_{g,n}}
\newcommand{\Mgb}{\overline{\mathcal{M}}_g}
\newcommand{\Mgnb}{\overline{\mathcal{M}}_{g,n}}
\newcommand{\Hdg}{\mathcal{H}_{g,d}}
\newcommand{\Hdgf}{\mathcal{H}^+_{g,d}}
\newcommand{\Hdgb}{\overline{\mathcal{H}}_{g,d}^{\text{ad}}}
\newcommand{\Hdgfb}{\overline{\mathcal{H}}^{+\text{ad}}_{g,d}}
\newcommand{\Hdgbv}{\overline{\mathcal{H}}_{g,d}}
\newcommand{\Hdgfbv}{\overline{\mathcal{H}}^{+}_{g,d}}
\newcommand{\ram}{\text{ram}}
\newcommand{\Kod}{\text{Kod}}
\newcommand{\M}[2]{\mathcal{M}_{{#1}, {#2}}}
\newcommand{\Mbar}[2]{\overline{\mathcal{M}}_{{#1}, {#2}}}
\newcommand{\PHt}{\widetilde{\PP\HH}_X^{g,d}}
\newcommand{\PHb}{\overline{\PP\HH}_X^{g,d}}
\newtheorem{Thm*}{Theorem*}
\theoremstyle{definition}
\title{The Hurwitz space Picard rank conjecture for $d>g-1$ 
\\ 
}
\author{Scott Mullane}
\date{\today}
\email{{\tt scott.mullane@hu-berlin.de}}
\begin{document}
\thispagestyle{empty}

\maketitle

\begin{abstract}
We show the simple Hurwitz space $\mathcal{H}_{g,d}$ has trivial rational Picard group for $d>g-1$. 
\end{abstract}

\setcounter{tocdepth}{1}


\section{Introduction}

The Hurwitz space $\Hdg$, parameterising isomorphism classes of simply branched genus $g$, degree $d$ covers of the rational line has resided at the intersection of geometry, algebra, topology and analysis for over a century and a half. From the perspective of algebraic geometry, Hurwitz spaces have been a major tool in understanding important aspects of the moduli space of curves $\Mg$, including the dimension~\cite{Riemann}, irreducibility~\cite{Clebsch},\cite{Hurwitz}, dimension of proper subvarieties~\cite{Diaz}, and the birational classification~\cite{HarrisMumford}. Despite this, some fundamental questions on the global geometry of $\Hdg$ persist. In this paper we consider the questions of the Picard group and the birational classification problem for $\Hdg$.

It has long been conjectured that the rational Picard group $\Pic_\QQ(\Hdg)$ is trivial and was known in the range $d\leq5$ and $d>2g-2$. The case of $d=2$ was proven by Cornalba and Harris~\cite{CornalbaHarris}, $d=3$ by Stankova-Frenkel~\cite{SF} and $d=4,5$ by Deopurkar and Patel~\cite{DP}. When $d>2g-2$ the map $\Hdg\longrightarrow\mathcal{M}_{g}$ is a fibration over $\mathcal{M}_g$ and the result follows from Harer's theorem~\cite{Harer} that the rational Picard group of $\Mg$ has rank one (see~\cite{Moch} or \cite{DiazEdidin}). 

For reasons that will become clear when we discuss our methods, we also consider the finite cover $\Hdgf$ of $\Hdg$ obtained by marking one branch point. The main result of this paper is the following.

\begin{restatable}{thm}{PicRank}
\label{thm:PicRank}
The rational Picard groups $\Pic_{\QQ}(\Hdgf)$ for $d>g$ and $\Pic_{\QQ}(\Hdg)$ for $d>g-1$ are trivial.
\end{restatable}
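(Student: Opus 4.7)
The plan is to first prove $\Pic_\QQ(\Hdgf) = 0$ for $d > g$ and then deduce the statement for $\Hdg$ in the same range via the finite forgetful map $\Hdgf \to \Hdg$ of degree $b = 2g + 2d - 2$; since pullback is injective on rational Picard groups along a finite map (by the norm/trace argument), triviality of $\Pic_\QQ(\Hdgf)$ gives triviality of $\Pic_\QQ(\Hdg)$ in the same range. The remaining case $d = g$ for $\Hdg$ must be handled separately, by working directly with the dominant map $\Hdg \to \Mg$ whose generic fiber is an open subset of the Brill--Noether locus $W^1_g(C)$, and combining Harer's theorem $\Pic_\QQ(\Mg) = \QQ\lambda$ with an admissible covers analysis that expresses the pullback of $\lambda$ as a combination of boundary divisors.

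For the principal claim, I would study the map
\[
\pi \colon \Hdgf \longrightarrow \Mgm, \qquad (C, f, b) \longmapsto (C, p),
\]
sending a marked cover to the source curve together with the unique ramification point $p$ of $f$ above the marked branch $b$. The hypothesis $d > g$ ensures $h^0(L) \geq 2$ for every $L \in \Pic^d(C)$, so $\pi$ is dominant, and its generic fiber is an open subset of a $\PP^1$-bundle over $\Pic^d(C)$ parameterising pairs $(L, V)$ with $V \subset H^0(L)$ a pencil ramified at $p$. By Arbarello--Cornalba, $\Pic_\QQ(\Mgm) = \QQ\lambda \oplus \QQ\psi$, so modulo boundary $\Pic_\QQ(\Hdgf)$ is generated by $\pi^*\lambda$, $\pi^*\psi$, and classes coming from the fiber; the fiber contribution, arising from a $\PP^1$-bundle over an abelian variety with various degenerate loci removed, can be shown to vanish once one accounts for the excluded loci (non-simple ramification, special line bundles, collision of the branch point with another branch point), each of which contributes a boundary divisor of $\Hdgf$.

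The crux is then to show that $\pi^*\lambda$ and $\pi^*\psi$ vanish in $\Pic_\QQ(\Hdgf)$. I would pass to the admissible covers compactification $\Hdgfb$ and produce explicit $\QQ$-linear relations expressing $\pi^*\lambda$ and $\pi^*\psi$ as sums of boundary divisors, by pulling back tautological identities on $\Mgbm$ along the extension of $\pi$ and reinterpreting the interior error terms as classes of admissible cover degenerations. The main obstacle is the combinatorial identification of the admissible cover boundary strata (labelled by compatible degenerations of source curve and cover) together with the computation of coefficients in these relations, and verifying that the relations remain sharp throughout the full range $d > g$ --- in particular, that no new boundary strata are missed as $d$ approaches $g$, which is the critical value beyond which $\pi$ ceases to be dominant.
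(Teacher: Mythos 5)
Your outer scaffolding matches the paper: prove $\Pic_\QQ(\Hdgf)=0$ for $d>g$, transfer to $\Hdg$ via the degree-$(2g+2d-2)$ forgetful map using $\varphi_*\varphi^*=\deg\varphi\cdot\mathrm{id}$, and treat $d=g$ separately. But the core of the argument is missing at two points. First, the fibration you choose, $\Hdgf\to\Mgm$ remembering only the ramification point over the marked branch point, has generic fibre of dimension $2d-g-3$: an open subset of a divisor in a $\Gr(2,d-g+1)$-bundle over $\Pic^d(C)$, not a $\PP^1$-bundle over $\Pic^d(C)$ (the dimensions agree only when $d=g+2$). The assertion that ``the fibre contribution can be shown to vanish once one accounts for the excluded loci'' is exactly the hard point and is unsupported: already the universal Jacobian contributes a theta-type divisor class not pulled back from $\Mgm$, and the Grassmannian of pencils contributes more. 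The paper avoids this by remembering the \emph{entire} fibre over the distinguished branch point and identifying a marked cover with the exact differential $f^*dz$ of signature $(-3,-2^{d-2},1^{2g+2d-3})$; the resulting partial compactification $\PHt$ is, away from codimension two, a projectivised \emph{linear} bundle over $\M{g}{1+[d-2]}$ for $d>g+1$ (and a birational contraction with irreducible exceptional divisor for $d=g+1$), so its Chow rank is computed exactly as $\rho(\M{g}{1+[d-2]})+1=4$.

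Second, your plan to produce explicit relations expressing $\pi^*\lambda$ and $\pi^*\psi$ as sums of boundary divisors ``by pulling back tautological identities on $\Mgbm$'' cannot work as stated: on $\Mgbm$ itself (for $g\geq 3$) the classes $\lambda$, $\psi$ and the boundary divisors are linearly independent, so there is no such identity to pull back; the needed relations are intrinsic to the Hurwitz space and you give no mechanism for finding them. The paper never exhibits these relations either --- it deduces their existence from a rank count: $\rho(\PHt)=4$, and the four boundary components $T$, $D$, $\delta_1$, $\delta_2$ of $\PHt\setminus\Hdgf$ are linearly independent. That independence is itself nontrivial: it uses Patel's theorem on $\Hdgbv$ pushed through $\varphi$, together with the explicit test curves $F$, $G_{[3]}$, $G_{1+[2]}$, $A_h$, $B_h$ to pin down the coefficients of the additional boundary divisors of $\Hdgfbv$. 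Only then does $A^1_\QQ(\Hdgf)=0$ follow. For $d=g$ the same gap recurs in your plan: controlling $\Pic_\QQ$ of the $W^1_g$-fibration over $\Mg$ is not easier than the original problem, whereas the paper uses the explicit birational model $[\pi]\mapsto[C,K_C\otimes\pi^*\OO(-1)]$ onto the universal symmetric product $\mathcal{C}_{g,g-2}$, whose Picard rank (three) is known and is exactly matched by the three surviving boundary divisors, again independent by Patel.
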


The compactification we construct to prove Theorem~\ref{thm:PicRank} carries natural rulings leading to the following observation.

\begin{restatable}{lem}{KodDim}
\label{thm:KodDim}
$\Hdgf$ and $\Hdg$ are uniruled when $d>g+1$. 
\end{restatable}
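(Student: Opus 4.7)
The plan is to combine the compactification $\Hdgb$ constructed in the proof of Theorem~\ref{thm:PicRank} with the Boucksom--Demailly--P\u{a}un--Peternell criterion: a smooth projective variety is uniruled if and only if its canonical class is not pseudoeffective.

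The first step is to observe that $K_{\Hdgb}$ is $\QQ$-linearly supported on the boundary. Indeed, the standard right-exact sequence
\[\bigoplus_i \QQ[B_i]\longrightarrow \Pic_\QQ(\Hdgb)\longrightarrow \Pic_\QQ(\Hdg)\longrightarrow 0\]
combined with the vanishing of $\Pic_\QQ(\Hdg)$ from Theorem~\ref{thm:PicRank} forces $K_{\Hdgb}\sim\sum_i a_i B_i$ for some $a_i\in\QQ$, where the $B_i$ are the boundary components.

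Next I would compute the coefficients $a_i$ explicitly, either via a Mumford-type Grothendieck--Riemann--Roch formula applied to the universal cover $\Chi\to\Hdgb$ and its relative dualizing sheaf, or by evaluating $K_{\Hdgb}$ against a basis of test curves that move through a general point of each boundary component. The target is to show that when $d>g+1$ all $a_i$ are $\leq 0$ with at least one strict inequality, i.e.\ $-K_{\Hdgb}$ is represented by a non-zero effective boundary divisor. After passing to a smooth projective model if necessary, this rules out pseudoeffectivity of $K$: for any ample class $H$ the intersection $K\cdot H^{n-1}$ is strictly negative. By BDPP the smooth model, hence $\Hdgb$ and its dense open $\Hdg$, is uniruled. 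The same argument applied to $\Hdgfb$ produces uniruledness of $\Hdgf$, explaining why the pointed and unpointed statements share the threshold $d>g+1$.

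The main obstacle is the explicit coefficient computation: enumerating every boundary stratum of the admissible-cover compactification, identifying its class, and tracking its contribution to $K_{\Hdgb}$. The numerical threshold $d>g+1$ should drop out as the precise point where the negative boundary contributions begin to dominate the positive Hodge-type contribution to $K_{\Hdgb}$. As a geometric sanity check valid for $\Hdg$, for $d>g+1$ the dominant map $\Hdg\to\Mg$ has general fiber (after accounting for the $\PGL_2$-action on the target) an open subset of a $\Gr(2,H^0(L))$-bundle over $\Pic^d(C)$ with $h^0(L)=d-g+1\geq 3$; any line in this Grassmannian through a given general $g^1_d$ sweeps out an explicit rational curve in $\Hdg$ through a general point, independently confirming the conclusion.
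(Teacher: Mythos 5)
Your proposed route through BDPP has a genuine gap at its central step: you never compute the boundary coefficients $a_i$ in $K\sim\sum_i a_i B_i$, and the assertion that they are all $\leq 0$ precisely when $d>g+1$ is not only unproven but cannot follow from soft considerations. Note that $\Pic_\QQ(\Hdg)=0$ also holds for $d=g$ and $d=g+1$ (this is the content of Theorem~\ref{thm:PicRank}), so in those cases too the canonical class of a compactification is supported on the boundary; yet $\HH_{g,g}$ and $\HH_{g,g+1}$ are of general type for $g\geq 12$. Hence the entire burden of your argument rests on the sign computation, which is a substantial enumeration over all boundary components of the (non-normal) admissible covers space: you would have to work on the normalisation $\Hdgbv$, compute $K$ there, and then control discrepancies on a resolution before BDPP applies, since the canonical class of a smooth model differs from the pullback of $K_{\Hdgbv}$ by exceptional divisors of a priori unknown sign. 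None of this is carried out, and nothing in the plan indicates why the threshold would fall exactly at $d=g+1$.

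Ironically, your closing ``sanity check'' is essentially the paper's actual proof, and it is complete and short once made precise. Proposition~\ref{prop:proj} shows that for $d>g+1$ the space $\PHt\setminus\widetilde X$ is a projective bundle over $\M{g}{1+[d-2]}\setminus X$ with fibres $\PP H^0(2p_1+p_2+\cdots+p_{d-1})$ of dimension $d-g-1\geq 1$, where $X$ and $\widetilde X$ have codimension at least two; the lines in these fibres are exactly your pencils of $g^1_d$'s, they pass through the general point of $\Hdgf$, and a general such line is not contained in the non-simply-branched locus, so $\Hdgf$ is uniruled. The finite morphism $\varphi:\Hdgf\to\Hdg$ then pushes the ruling forward. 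Promoting that paragraph from aside to argument recovers the theorem with no canonical class computation at all.
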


Further, $\Hdgf$ and $\Hdg$ are not unirational in this range for $g\geq10$ due to the natural dominant map to $J_{g,d}$ the universal genus $g$ degree $d$ Jacobian which is of general type for $d>g+1$ and $g\geq 10$. 

Our strategy to prove Theorem~\ref{thm:PicRank} is to consider $\Hdgf$ as an open dense subvariety of a projectivised stratum of exact differentials of fixed signature and obtain a compactification by taking the closure inside the projectivisation of a twisted Hodge bundle of stable differentials.

From any degree $d$ simply branched cover $f:C\longrightarrow\PP^1$ with a branch point at $\infty$ we obtain a meromorphic exact differential $f^*dz$ on $C$ with signature of zeros and poles $(-3,-2^{d-2},1^{2g+2d-3})$ where the poles occur at the points in $f^{-1}(\infty)$ and the zeros at the remaining ramification points of $f$. Similarly, given any exact differential of this signature, by integrating the differential we obtain a degree $d$ cover of the rational line simply branched over $\infty$ with possibly non-simple branching elsewhere, though as the zeros of the differential are simple the ramification will be simple, that is, the branching profile is always of type $(2^k,1^{d-2k})$ for some $k$.

In \S\ref{sec:proj} we show this identifies $\Hdgf$ as an open subvariety inside the projectivisation of a twisted Hodge bundle of rank $2d+g$ over 
$$\M{g}{1+[d-2]}:=\M{g}{d-1}/\mathfrak{S}_{d-2}$$
with fibres over $[C,p_1,p_2+\dots+p_{d-1}]$ identified with 
$$H^0(\omega_C(3p_1+2(p_1+\dots+p_{d-1}))).$$
Taking the closure we obtain the partial compactification $\PHt$ of $\Hdgf$. Considering the closure in the natural extension of the twisted Hodge bundle to stable differentials over $\Mbar{g}{1+[d-2]}$ we obtain the compactification $\PHb$.

For $d>g+1$, Proposition~\ref{prop:proj} shows $\PHt\setminus\widetilde{X}$  is a projective bundle over  $\M{g}{1+[d-2]}\setminus X$ where $\widetilde{X}$ and $X$ are subvarieties of codimension at least two in $\PHt$ and $\M{g}{1+[d-2]}$ respectively. Hence Lemma~\ref{thm:KodDim} holds and in these cases we also obtain the rank of the rational Chow group
\begin{equation}\label{eq:rank}
\rho(\PHt)=\rho(\M{g}{1+[d-2]})+1=4.
\end{equation}

For $d=g+1$, Proposition~\ref{prop:d=g+1} gives a proper birational morphism 
$$\pi:\PHt\longrightarrow \M{g}{1+[d-2]}$$
and Lemma~\ref{thm:KodDimToo} gives the implications to the Kodaira dimension of $\HH_{g,g+1}^+$ that follow from results on $\M{g}{1+[g-1]}$ in $g=10$ by Barros and the author~\cite{BM} and $g\geq11$ by Farkas and Verra~\cite{FV},\cite{FarkasVerraTheta}. Further, as the exceptional locus is irreducible we obtain bounds on the rank of the rational Chow group
$$\rho(\M{g}{1+[g-1]}) \leq \rho(\widetilde{\PP\HH}_X^{g,g+1})\leq \rho(\M{g}{1+[g-1]})+1.$$
The results of \S 4 on the linear independence of the boundary components then imply that Equation~\ref{eq:rank} also holds in this case.

We begin \S\ref{sec:PicRank} by completing the remaining technical task of showing that for $d>g$ the rational Chow and hence Picard rank of $\PHt$ is completely accounted for by components of the boundary $\PHt\setminus \Hdgf$. We consider the well known compactifications $\Hdgfb$ and $\Hdgb$ by admissible covers~\cite{HarrisMumford} and their normalisations $\Hdgfbv$ and $\Hdgbv$ the spaces of twisted stable maps~\cite{ACV}. The identification of $\Hdgf$ as a open Zariski dense subvariety of $\PHt$ by pulling back a differential from the target curve to the source curve extends to a birational morphism
$$\tau:\Hdgfbv\setminus Y\longrightarrow  \PHt$$
for $Y$ a certain collection of boundary divisors of $\Hdgfbv$.

Hence any non-trivial relation in the four boundary components of $\PHt$ will pullback to give a non-trivial relation in the boundary components of $\Hdgfbv$. It is not known that the boundary components of $\Hdgfbv$ are linearly independent, however, Patel~\cite{Patel} showed the irreducible boundary components of $\Hdgbv$ are independent. Hence the pushforward of any non-trivial relation in the boundary components under the natural finite forgetful morphism
$$\varphi:\Hdgfbv\longrightarrow \Hdgbv$$
must be the trivial relation. With the addition of a number of test curves in $\Hdgfbv$ we show that the pullback of any relation in the boundary components of $\PHt$ must be trivial and hence the components of the boundary of $\PHt$ are linearly independent.

For $d=g$, the map 
$$\pi:\PHt\longrightarrow \M{g}{1+[d-2]}$$
is no longer dominant and we must use a different approach. Consider the finite branch morphism 
$$f_{\text{br}}:\Hdgbv\longrightarrow \Mbar{0}{[2g+2d-2]}$$
and let $Y$ be the collection of all boundary divisors other than the three irreducible components of $f_{\text{br}}^*\delta_{[2]}$ where the stable model of the general source curve is a smooth curve (see Figure~\ref{Divisors2}). Then we obtain an isomorphism
$$\begin{array}{rcl}
\overline{\HH}_{g,g}\setminus \{Y\cup\widetilde{X}\}&\longrightarrow&\mathcal{C}_{g,g-2}\setminus X\\
\left[\pi:C\to\PP^1\right]&\mapsto&\left[C,\omega_C\otimes\pi^*\OO(-1)\right]\\ 
\end{array}$$
where $\widetilde{X}$ and $X$ are the loci of indeterminacy of the associated rational map and the rational inverse respectively (see \S~\ref{sec:PicRank} for details). As $\codim(\widetilde{X})=\codim(X)=2$ we obtain the rank of the rational Chow group
$$\rho(\overline{\HH}_{g,g}\setminus Y)=\rho(\mathcal{C}_{g,g-2})=3.$$
The boundary components of $\Hdgbv$ are linearly independent by Patel~\cite{Patel} which completes the final case of Theorem~\ref{thm:PicRank}.

The paper is organised as follows. In \S\ref{sec:proj} we define and discuss the properties of the compactifications $\PHb$ and $\PHt$ of $\Hdgf$. In \S\ref{sec:kod} we prove the results on the Kodaira dimension of Hurwitz space. In \S\ref{sec:PicRank} we show that for $d>g$ the rational Chow rank of $\PHt$ is completely accounted for by the components of the boundary $\PHt\setminus\Hdgf$, while for $d=g$ the same holds for $\Hdgbv\setminus Y$, hence proving Theorem~\ref{thm:PicRank}.

\begin{ack}
I am grateful to Ignacio Barros, Dawei Chen and Martin M\"{o}ller for many useful discussions. The author was supported by the Alexander von Humboldt Foundation during the preparation of this article. 
\end{ack}

\section{The twisted Hodge bundle and Hurwitz space}\label{sec:proj}
The space of holomorphic differentials on genus $g$ curves forms a vector bundle $\HH$ of rank $g$ over $\Mg$ which extends to a rank $g$ vector bundle $\overline{\HH}$ over $\Mgb$, the Deligne-Mumford moduli space of stable curves. If
$$\pi:\mathcal{C}\longrightarrow \Mgb$$
is the universal family, then $\overline{\HH}$ is defined as the pushforward of the relative dualising sheaf
$$\overline{\HH}:=\pi_*\omega_{\mathcal{C}/\Mgb}.$$
The fibre of $\overline{\HH}$ over a stable nodal curve $C$ is $H^0(C,\omega_C)$, the global sections of the dualising sheaf $\omega_C$ (see~\cite[Section 3A]{HarrisMorrison}).

Similarly, for any $\mu=(m_1,\dots,m_n)$ with $m_i\leq 0$   
$$\HH(\mu,\overline{\{1^{2g-2-\sum m_i}\}})$$
is a rank $g-1-\sum m_i$ vector bundle over $\Mgn$ where the fibre over $[C,p_1,\dots,p_n]\in \Mgn$ is given by 
$$H^0(C,\omega_C(-\sum m_i p_i))$$
which is naturally identified with the $g-1-\sum m_i$ dimensional vector space of differentials on $C$ with poles of order at most $-m_i$ at $p_i$ for $m_i<0$ and no other poles. Here the notation of the brackets is indicating that these simple zeros in the twisted differentials are unordered, while the line above indicates that these simple zeros can collide with each other and with the marked points in $\Mgn$. We let 
$$\HH(\mu,\{1^{2g-2-\sum m_i}\})$$
denote the open subvariety where the zeros do not collide with each other or the marked points. 

This bundle extends in the same way to a rank $g-1-\sum m_i$ vector bundle over $\Mgnb$. If
$$\pi:\mathcal{C}\longrightarrow \Mgnb$$
is the universal family, define
$$\overline{\HH}(\mu,\{1^{2g-2-\sum m_i}\}):=\pi_*\omega_{\mathcal{C}/\Mgnb}\big(-\sum_{i=1}^n m_i\mathcal{Z}_i\big)$$ 
the bundle over $\Mgnb$, where $\mathcal{Z}_i$ denotes the image of the section of the universal family $\pi$ given by the $i$th marked point.

A natural subvariety of the twisted Hodge bundle is obtained by restricting to exact differentials, that is, meromophic differentials $\eta$ on a smooth curve $C$ of the required signature such that if $P$ is the set of poles of $\eta$, then $\int_\gamma\eta=0$ for any $\gamma\in H_1(C\setminus P,\ZZ)$. We denote this subvariety by 
$$\HH_X(\mu,\overline{\{1^{2g-2-\sum m_i}\}}).$$ 
Fix $\mu=(-3,-2^{d-2})$ and consider $\HH_X(\mu,{\{1^{2g+2d-3}\}})$. Integrating the differential gives a degree $d$ cover of the rational line, in which, the image of the marked points in $\M{g}{d-1}$ are all $\infty$. 

Similarly, given any simply branched degree $d$ cover of the rational line by a genus $g$ curve, pulling back a differential on the rational curve with a double pole at one of the branch points and no other poles or zeros gives an exact differential of this type with the double poles unordered. We let
$$\PP\HH_X^{g,d}:=\PP\HH_X(\mu,{\{1^{2g+2d-3}\}})/\mathfrak{S}_{d-2}$$
where $\mathfrak{S}_{d-2}$ acts by permutation on the labelling of the marked poles of order $2$. The above identification gives an isomorphism between an open Zariski dense subvariety of $\PP\HH_X^{g,d}$ and a pointed version of the simple Hurwitz space $\Hdgf$. Specifically, if $Y$ is the subvariety of $\PP\HH_X^{g,d}$ defined by the condition that the degree $d$ cover obtained by integrating the differential does not have simple branching, then 
$$\PP\HH_X^{g,d}\setminus Y\cong \Hdgf$$
where $\Hdgf$ is the Hurwitz space of genus $g$, degree $d$ covers of the rational line with simple branching and one branch point marked. As the zeros of the differential remain distinct any non-simple branch profile will be of the type $(2^k,1^{d-2k})$ for some $k$. We denote by $\overline{\PP\HH}_X^{g,d}$, the closure of $\PP\HH_X^{g,d}$ inside the bundle
$$ \PP\overline{\HH}(\mu,\overline{{\{1^{2g+2d-3}\}}})/\mathfrak{S}_{d-2}$$
over 
$$\overline{\mathcal{M}}_{g,1+[d-2]}:=\Mbar{g}{d-1}/\mathfrak{S}_{d-2}.$$ 
An exact differential $\eta$ on a curve $C$ satisfies $\int_\gamma\eta=0$ for $\gamma\in H_1(C\setminus P)$ where $P$ is the set of points appearing as poles of $\eta$. Hence integrating $\eta$ gives a cover of $C\to\PP^1$ with the fibre above $\infty$ given by the pole divisor of $\eta$. In the other direction, any cover $C\to\PP^1$ with simple branching over $\infty$ will give an exact differential with the given pole orders at the points in this branch fibre by pulling back $dz$ on $\PP^1$. Hence the fibre over $[C,p_1,p_2+\dots+p_{d-1}]\in\mathcal{M}_{g,1+[d-2]}$ is non-empty when we can construct such a map
$$\dim H^0(2p_1+p_2+\dots+p_{d-1})\geq 2$$
and in this case, by Riemann-Roch the fibre dimension is equal to
$$\dim H^0(2p_1+p_2+\dots+p_{d-1})-2=d-g-1+\dim H^0(\omega_C(-2p_1-p_2-\dots-p_{d-1})).$$
Hence 
$$\pi:\overline{\PP\HH}_X^{g,d}\longrightarrow \overline{\mathcal{M}}_{g,1+[d-2]}$$
is a dominant morphism for $d>g$. For a complete description (beyond the scope of our discussion) of the degeneration of pointed differentials to stable nodal curves see~\cite{BCGGM} and the recent smooth compactification of the strata of differentials~\cite{BCGGM2}. For a discussion of the limit of zero residue conditions and applications to the irreducibility of certain non-simple Hurwitz spaces see~\cite{Mullane}.

We define $\PHt$ to be the partial compactification of $\Hdgf$ obtained by taking the closure inside the Hodge bundle over $\M{g}{1+[d-1]}$, or equivalently, restricting $\overline{\PP\HH}_X^{g,d}$ to the pre-image of $\M{g}{1+[d-1]}$. This is simply the projectivisation of 
$$\HH_X(-3,\{-2^{d-2}\},\overline{\{1^{2g+2d-3}  \}}).$$ 
Define $X$ to be the locus of $[C,p_1,p_2+\dots+p_{d-1}]\in\mathcal{M}_{g,1+[d-2]}$ such that 
$$\dim H^0(\omega_C(-2p_1-p_2-\dots-p_{d-1}))>0$$
and $\widetilde{X}$ to be the pre-image in $\PHt$.

The following proposition shows that when $\pi$ has positive dimensional fibres, $\widetilde{X}$ has codimension at least two and $\PHt\setminus\widetilde{X}$ is a projective bundle.

\begin{prop}\label{prop:proj}
Fix $d>g+1$, then $X$ and $\widetilde{X}$ have codimension at least two in $\M{g}{1+[d-2]}$ and $\PHt$ respectively and $\PHt\setminus \widetilde{X}$ is a projective bundle over $\M{g}{1+[d-2]}\setminus X$.
\end{prop}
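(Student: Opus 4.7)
The plan is to proceed in three parts: first identify $\PHt \setminus \widetilde{X}$ as a projective bundle via a fibrewise integration correspondence, then bound $\codim X$ by a Brill--Noether parameter count, and finally bound $\codim \widetilde{X}$ by stratifying $X$ according to the value of $h^0(\omega_C(-D'))$, where $D' := 2p_1 + p_2 + \cdots + p_{d-1}$. The key point for the bundle structure is that every exact differential on a smooth curve $C$ with the prescribed pole orders arises as $\eta = df$ for a meromorphic function $f$ with pole divisor bounded by $D'$, with $f$ recovered from $\eta$ up to an additive constant. Writing $\mathcal{D} := 2\mathcal{Z}_1 + \mathcal{Z}_2 + \cdots + \mathcal{Z}_{d-1}$ for the corresponding relative divisor on the universal curve over $\M{g}{1+[d-2]}$, the vanishing $h^0(\omega_C(-D')) = 0$ outside $X$ implies by cohomology and base change that the direct image of $\mathcal{O}(\mathcal{D})$ is a vector bundle of rank $d-g+1$ on $\M{g}{1+[d-2]} \setminus X$. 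Quotienting by the constant subsheaf gives a vector bundle $\mathcal{F}$ of rank $d-g$, and the fibrewise differentiation $f \mapsto df$ identifies $\PP \mathcal{F}$ with $\PHt \setminus \widetilde{X}$ inside the twisted Hodge bundle ($\Hdgf$ being the dense open locus where $f$ has pole orders exactly $(2,1,\ldots,1)$). This yields the claimed $\PP^{d-g-1}$-bundle structure.

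For the codimension of $X$: when $d \geq 2g-1$ every effective degree-$d$ divisor is non-special and so $X$ is empty, so assume $g+2 \leq d \leq 2g-2$. Parameterise $X$ generically finitely by tuples $(C, p_1, E, \{p_2,\ldots,p_{d-1}\})$ where $E \in |\omega_C(-D')|$ is an effective divisor of degree $2g-2-d$ (uniquely determined on the dense stratum where $h^0(\omega_C(-D')) = 1$). The parameters contribute $3g-3$ from $C$, $1$ from $p_1$, $2g-2-d$ from $E \in \Sym^{2g-2-d}(C)$, and $d-g-2 = \dim |\omega_C(-2p_1 - E)|$ from $\{p_2,\ldots,p_{d-1}\}$ as an effective divisor in this linear system, which is non-special for generic data since $\deg(\omega_C(-2p_1 - E)) = d-2 \geq g$. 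The total $4g-6$ gives $\codim X \geq (3g+d-4) - (4g-6) = d-g+2 \geq 4$.

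Finally, for $\widetilde{X}$, stratify $X = X_1 \supset X_2 \supset \cdots$ by $X_s := \{h^0(\omega_C(-D')) \geq s\}$; the fibre dimension formula recalled before the proposition gives $\dim \pi^{-1}(\text{point of } X_s \setminus X_{s+1}) = d-g-1+s$. The top stratum $s=1$ dominates: $\dim \pi^{-1}(X_1 \setminus X_2) \leq (4g-6) + (d-g) = 3g+d-6$, which has codimension $\geq d-g+1 \geq 3$ inside $\PHt$ of dimension $2g+2d-5$. For deeper strata, an analogous Brill--Noether parameter count bounds $\codim X_s$ from below by roughly $s(d-g+s)$, comfortably outpacing the linear growth of the fibre dimension. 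The main technical obstacle is verifying these bounds hold uniformly across $\M{g}{1+[d-2]}$ rather than only over Brill--Noether-general curves; in the range $d > g+1$ the slack between codimension and fibre growth is large enough that this is routine, but it is precisely the collapse of this slack at $d = g+1$ that forces the separate treatment in Proposition~\ref{prop:d=g+1}.
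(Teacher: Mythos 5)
Your route is genuinely different from the paper's. The paper does not do a Brill--Noether parameter count at all: it first observes that irreducibility of $\Hdgf$ forces $\codim(X_r)\geq r+1$ for the stratum $X_r$ where $\dim H^0(\omega_C(-2p_1-\dots-p_{d-1}))=r$ (otherwise $\pi^{-1}(X_r)$ would be a full-dimensional component of the irreducible $\PHt$), and then rules out equality by projecting away one of the points $p_j$: if $\codim(X_r)=r+1$, irreducibility of the degree-$(d-1)$ stratum forces $\codim(\varphi_j(X_r))=r+1$ for every $j$, so each of the $d-2$ points would move freely in the $(g-1)$-dimensional canonical system, impossible for $d>g+1$. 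This gives $\codim(X_r)\geq r+2$ for \emph{all} $r$ in one stroke, with the only global input being irreducibility of Hurwitz spaces. Your identification of the bundle structure via integration and cohomology-and-base-change is fine and matches the paper's setup (fibre $\PP^{d-g-1}$ away from $X$).

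However, your codimension argument has two problems. First, a concrete numerical error: $\omega_C(-2p_1-E)$ is \emph{never} non-special, since $h^1(\omega_C(-2p_1-E))=h^0(2p_1+E)\geq 1$ for the effective divisor $2p_1+E$; generically $h^0(2p_1+E)=1$, so $\dim|\omega_C(-2p_1-E)|=d-g-1$, not $d-g-2$, and your count gives $\codim X\geq d-g+1\geq 3$ rather than $d-g+2$. The conclusion survives, but the stated bound is off by one. Second, and more seriously, the step you call ``routine'' is the actual content: an incidence-variety count bounds only the stratum where the fibres $|{\omega_C(-2p_1-E)}|$ have generic dimension, and for the deeper strata $X_s$ you invoke the \emph{expected} codimension $s(d-g+s)$ of a Brill--Noether locus as if it were a proven lower bound uniformly over all of $\M{g}{1+[d-2]}$. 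Dimension bounds for Brill--Noether loci over special curves (hyperelliptic, etc.) are exactly where such counts fail or require extra input (Clifford/Martens-type theorems), and nothing in the range $d>g+1$ makes this automatic. As written, the strata with $s\geq 2$ and the jumping loci of $h^0(2p_1+E)$ are not controlled, so the proof is incomplete; the paper's irreducibility-plus-projection argument is precisely the device that makes these cases unnecessary.
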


\begin{proof}
For $[C,p_1,p_2+\dots+p_{d-1}]$ in $\M{g}{1+[d-2]}$, the exact differentials form a linear subspace in 
$$H^0(\omega_C(3p_1+2(p_2+\dots+p_{d-1})).$$ 
As discussed above, the fibre dimension is given by
$$d-g-1+\dim H^0(\omega_C(-2p_1-p_2-\dots-p_{d-1})).$$ 
Our task is to show that $X$ and $\widetilde{X}$ have codimension at least two in $\M{g}{1+[d-2]}$ and $\PHt$ respectively.  

Consider the pointed version of the subvariety of exact differentials in the twisted Hodge bundle 
$${\PP\HH}_X(-3,-2^{d-2},\overline{\{1^{2g+2d-3}\}}).$$ 
Here the double poles are ordered and the bar indicates that we consider the partial compactification where the simple zeros can collide with each other and the poles, though the underlying pointed curve in $\M{g}{d-1}$ remains smooth. For $r\geq 1$, let $X_r$ be the locus in $\M{g}{d-1}$ with fibre dimension at least $d-g-1+r$, hence the locus such that 
$$\dim H^0(\omega_C(-2p_1-p_2-\dots-p_{d-1}))\geq r.$$
For $d>g+1$, because $\pi$ is dominant and $\Hdgf$ is irreducible, the pre-image of these loci have at most codimension one. Further, there exist such a locus with pre-image codimension one in ${\PP\HH}_X(-3,-2^{d-2},\overline{\{1^{2g+2d-3}\}})$ if and only if there exists an $r$ such that $\codim(X_r)=r+1$. We show no such locus exists.

Assume $\codim(X_r)=r+1$ and let 
$$\varphi_j:\Mbar{g}{d-1}\longrightarrow\Mbar{g}{d-2}$$ 
be the morphism forgetting the $j$th point for $j=2,\dots,d-1$. We denote the closure of ${X}_r$ in $\Mbar{g}{d-1}$ by $\overline{X}_r$. Then $\codim(\overline{X}_r)=r+1$ in $\Mbar{g}{d-1}$. Hence $\codim(\pi_j(\overline{X}_r))=r$ or $r+1$. However, for any $[C,p_1,\dots,p_{d-1}]$ in $X_r$ we observe
$$\dim H^0(\omega_C(-2p_1-p_2-\dots-0p_j-\dots-p_{d-1}))\geq r$$
and if $\codim(\pi_j(\overline{X}_r))=r$ in $\Mbar{g}{d-2}$ then the pre-image of $\pi_j(\overline{X}_r)$ provides a full dimensional component of $\widetilde{\PP\HH}_X^{d-1,g}$ contradicting the irreducibility of $\HH_{d-1,g}$.

Hence if $\codim(X_r)=r+1$ in $\Mbar{g}{d-1}$, then $\codim(\pi_j(\overline{X}_r))=r+1$ in $\Mbar{g}{d-2}$ for $j=2,\dots,d-1$ which implies that the general fibre has dimension one, that is, the forgotten point $p_j$ moves freely in $\overline{X}_r$. Hence for any $[C,p_1,\dots,p_{d-1}]$ in $\overline{X}_r$ the $d-2$ points $p_j$ for $j=2,\dots,d-1$ move freely in the $g-1$ dimensional canonical system $|\omega_C|$ which provides a contradiction for $d>g+1$ as $d-2>g-1$. Hence $\codim(X_r)\geq r+2$.
\end{proof}

In the case that $d=g+1$ the dimensions coincide and $\pi$ is a birational morphism with irreducible exceptional locus.
\begin{prop}\label{prop:d=g+1}
Fix $d=g+1$, then 
$$\pi:\widetilde{\PP\HH}_X^{g,g+1}\longrightarrow\M{g}{1+[g-1]}$$ 
is a proper birational morphism that has irreducible exceptional locus in codimension one $E$ that is contracted to the codimension two locus of $[C,p_1,p_2+\dots+p_{g}]$ such that 
$$\dim H^0(\omega_C(-2p_1-p_2-\dots-p_{g}))\geq 1.$$
\end{prop}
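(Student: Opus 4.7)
The plan is to mirror the proof of Proposition \ref{prop:proj}, adjusting for the borderline case $d=g+1$ where the generic projective fiber of $\pi$ drops to a point. Properness is automatic: $\PHt$ is the restriction of the proper morphism $\overline{\PP\HH}_X^{g,g+1}\to\Mbar{g}{1+[g-1]}$ (a closed subvariety of a projective bundle) along the open immersion $\M{g}{1+[g-1]}\hookrightarrow\Mbar{g}{1+[g-1]}$.

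For birationality, both source and target have dimension $4g-3$: $\dim\Hdgf=2g+2d-5$ specializes to $4g-3$, and $\dim\M{g}{1+[g-1]}=3g-3+g=4g-3$. Over a general point, Riemann-Roch gives $h^0(2p_1+p_2+\dots+p_g)-h^0(\omega_C(-2p_1-p_2-\dots-p_g))=2$, and since a generic effective divisor of degree $g+1$ on a genus-$g$ curve has $h^0=2$, we find $h^0(\omega_C(-2p_1-p_2-\dots-p_g))=0$ and projective fiber dimension $d-g-1+0=0$. Hence $\pi$ is generically injective, and therefore birational.

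To analyze the exceptional locus, set $X_r:=\{[C,p_1,p_2+\dots+p_g] : h^0(\omega_C(-2p_1-p_2-\dots-p_g))=r\}$ and $X:=\bigcup_{r\geq 1}X_r$, and parametrize $X_1$ via
$$\Phi:\{(C,D,p_1) : C\in\mathcal{M}_g,\ D\in C^{(g-3)}\text{ effective},\ p_1\in C\}\longrightarrow X_1,\qquad(C,D,p_1)\mapsto(C,p_1,p_2+\dots+p_g),$$
where $p_2+\dots+p_g$ is the unique effective representative of $|\omega_C-2p_1-D|$. Uniqueness follows from $h^0(\omega_C-2p_1-D)=h^0(2p_1+D)=1$ generically (Riemann-Roch with $\deg(2p_1+D)=g-1$). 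The source is irreducible of dimension $(3g-3)+(g-3)+1=4g-5$, and $\Phi$ is birational with inverse $D=\omega_C-2p_1-p_2-\dots-p_g$ on $X_1\setminus X_2$; hence $X_1$ is irreducible of codimension two in $\M{g}{1+[g-1]}$.

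Finally, to conclude that $E=\pi^{-1}(X)$ is irreducible of codimension one, I bound the higher-rank strata. Brill-Noether theory gives that the universal locus $\mathcal{W}^{r+1}_{g+1}\subset\mathcal{M}_g\times\Pic^{g+1}$ has dimension at most $4g-3-r(r+2)$, while the double-point divisors in a fixed linear series $|L|\simeq\PP^{r+1}$ form a subvariety of dimension at most $r$ (one parameter for $p_1\in C$ together with the $(r-1)$-dimensional residual system $|L-2p_1|$). Hence $\dim X_r\leq 4g-3-r(r+1)$, so $\dim\pi^{-1}(X_r)=\dim X_r+r\leq 4g-3-r^2<4g-4$ for $r\geq 2$. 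The codimension-one part of $E$ is therefore the closure of the $\PP^1$-bundle $\pi^{-1}(X_1\setminus X_2)\to X_1\setminus X_2$, which is irreducible because $X_1$ is. The main obstacle will be this Brill-Noether bound for $r\geq 2$: the forgetful-morphism argument of Proposition \ref{prop:proj} does not transfer to $d=g+1$, because the lower-degree projection $\widetilde{\PP\HH}_X^{g,g}\to\M{g}{1+[g-2]}$ is itself not dominant, so its irreducibility cannot be leveraged to rule out the case $\operatorname{codim}(X_r)=r+1$.
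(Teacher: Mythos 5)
Your skeleton matches the paper's: birationality because the generic fibre of $\pi$ is a point, irreducibility of the divisorial part of $E$ via irreducibility of the $r=1$ stratum, and a separate estimate showing the strata with $r\geq 2$ contribute only in codimension at least two. Your treatment of $X_1$ is a genuinely different and serviceable route: the paper identifies $\pi(E)$ with the image of the projectivised stratum $\PP\HH(2,\{1^{g-1}\},\overline{\{1^{g-3}\}})$ and cites Kontsevich--Zorich for the connectedness of $\HH(2,1^{2g-4})$, whereas your incidence correspondence over triples $(C,D,p_1)$ proves the same irreducibility by Riemann--Roch alone. The two parametrisations are essentially dual descriptions of the same locus ($\omega_C\sim 2p_1+p_2+\dots+p_g+D$); yours trades the external citation for some extra genericity bookkeeping (you still owe an argument that components of $X_1$ on which $h^0(\omega_C-2p_1-D)\geq 2$, where $\Phi$ fails to invert $\Psi$, have dimension strictly below $4g-5$, though a dimension count on $W^1_{g-1}$ handles this).

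The genuine gap is the one you flag yourself. The inequality $\dim\mathcal{W}^{r+1}_{g+1}\leq 4g-3-r(r+2)$ asserts that the universal Brill--Noether locus has at most its expected dimension; determinantal loci are bounded \emph{below}, not above, by their expected dimension, and over special curves $W^s_e$ routinely exceeds $\rho$, so this is not a citable fact and your $r\geq 2$ case is not established. Moreover, your reason for discarding the paper's alternative is mistaken: the forgetful-map argument of Proposition~\ref{prop:proj} does not need $\widetilde{\PP\HH}_X^{g,g}\to\M{g}{1+[g-2]}$ to be dominant. If $\codim(X_r)=r+1$ and $\codim(\varphi_j(X_r))=r$ for some $j$, then over $\varphi_j(X_r)$ the projectivised exact differentials of the degree-$g$ signature have fibres of dimension at least $r-1$, yielding a locus of dimension $(4g-4)-r+(r-1)=4g-5$, i.e.\ full-dimensional in that space; a full-dimensional piece must have generically distinct zeros (a triple ramification point costs a dimension), so it would be a second component of the irreducible $\HH^{+}_{g,g}$, and it cannot coincide with the known component, whose image in $\M{g}{1+[g-2]}$ has codimension one rather than $r\geq 2$. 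This is precisely the contradiction the paper invokes, and substituting it for your Brill--Noether bound closes the gap. (The branch that genuinely degenerates at $d=g+1$ is instead $\codim(\varphi_j(X_r))=r+1$ for all $j$, where the paper's ``points move freely in $\PP H^0(\omega_C)$'' contradiction fails because $d-2=g-1$; neither your write-up nor the paper's one-line proof addresses that case.)
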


\begin{proof}
The expected dimension of $H^0(2p_1+p_2+\dots+p_{g})$ is $2$, giving a unique pre-image under $\pi$. The existence of $X_r$ for $r\geq 2$ as defined above with $\codim(X_r)=r+1$ would again contradict the irreducibility of $\HH_{g,g}$. The irreducibility of this locus then follows from the irreducibility of the image in $\overline{\mathcal{M}}_{g,1+[g-1]}$. This locus is the image of the projectivised stratum 
$$\PP \HH(2,\{1^{g-1}\},\overline{\{1^{g-3}\}})$$ 
in $\M{g}{1+[g-1]}$ after forgetting the last $g-3$ points.
However, $\HH(2,1^{2g-4})$ is irreducible~\cite{KontsevichZorich}.
\end{proof}

\begin{prop}\label{prop:d=g}
Fix $d=g$, then $\overline{\PP\HH}_X^{g,g}$ is birational to $\overline{\PP\HH}(2,\{1^{g-2}\},\{1^{g-2}\})$.
\end{prop}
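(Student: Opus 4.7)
The plan is to realize both spaces as birational models of a common Brill--Noether divisor in the base $\overline{\mathcal{M}}_{g,1+[g-2]}$. Define
\begin{equation*}
W := \bigl\{[C,p_1,p_2+\dots+p_{g-1}]\in\overline{\mathcal{M}}_{g,1+[g-2]} : h^0(\omega_C(-D))\geq 1\bigr\},
\end{equation*}
where $D := 2p_1+p_2+\dots+p_{g-1}$. The aim is to show that both $\overline{\PP\HH}_X^{g,g}$ and $\overline{\PP\HH}(2,\{1^{g-2}\},\{1^{g-2}\})$ project birationally onto $W$, hence are birational to each other.

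By the fibre-dimension formula preceding Proposition~\ref{prop:proj}, for $d=g$ the fibre of $\pi\colon\overline{\PP\HH}_X^{g,g}\to\overline{\mathcal{M}}_{g,1+[g-2]}$ over $[C,p_1,p_2+\dots+p_{g-1}]$ has dimension $-1+h^0(\omega_C(-D))$, so the image is contained in $W$ and the generic fibre over $W$ is zero-dimensional on the open subset where $h^0(\omega_C(-D))=1$. By construction the fibre of $\overline{\PP\HH}(2,\{1^{g-2}\},\{1^{g-2}\})\to\overline{\mathcal{M}}_{g,1+[g-2]}$ is $\PP H^0(\omega_C(-D))$, of dimension $h^0(\omega_C(-D))-1$, also supported on $W$ with generically zero-dimensional fibre. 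Riemann--Roch gives $h^0(\omega_C(-D))=h^0(D)-1$, so $W$ is precisely the locus where the degree-$g$ divisor $D$ moves in a pencil; a parameter count (choice of $[C]\in\mathcal{M}_g$, a $g^1_g$ on $C$ accounting for $\rho(g,1,g)=g-2$ parameters, and a ramified fibre) gives $\dim W = 4g-5$, matching $\dim \HH_{g,g}^+ = 2g+2d-5 = 4g-5$. Since $\HH_{g,g}$ is irreducible (Clebsch--Hurwitz), both spaces are irreducible of this dimension, and each projection is therefore a birational morphism onto $W$.

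Explicitly, the resulting birational map is
\begin{equation*}
\overline{\PP\HH}_X^{g,g}\dashrightarrow \overline{\PP\HH}(2,\{1^{g-2}\},\{1^{g-2}\}),\qquad [C,D,\eta]\longmapsto [C,D,\omega],
\end{equation*}
sending an exact differential $\eta$ to the generically unique (up to scale) nonzero section $\omega\in H^0(\omega_C(-D))$: integrating $\eta$ yields a degree $g$ cover $f\colon C\to\PP^1$ with $f^{-1}(\infty)=D$, exhibiting $D$ as a member of a complete $g^1_g$; conversely, the pencil $|D|$ recovers $f$ up to the affine subgroup of $\PGL_2$ fixing $\infty$, and $df$ reconstructs $\eta$ up to scalar.

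The main technical obstacle is ruling out that higher Brill--Noether strata $W_r:=\{h^0(\omega_C(-D))=r\}$ for $r\geq 2$ contribute additional $(4g-5)$-dimensional components to either space. The Brill--Noether bound $\codim W_r\geq r$ for divisors of degree $g$ on curves of genus $g$ shows each such contribution has dimension at most $\dim W_r + (r-1) \leq 4g-5$, with equality forcing $r=1$ on a dense open; combined with the irreducibility of $\HH_{g,g}$, this confirms that both projections are birational onto the main stratum $W_1$ of $W$, yielding the desired birational equivalence.
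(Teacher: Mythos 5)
Your proposal is correct and takes essentially the same route as the paper: both spaces are compared through their forgetful morphisms to $\overline{\mathcal{M}}_{g,1+[g-2]}$, which have the same image (the closure of the locus where $h^0(\omega_C(-2p_1-p_2-\dots-p_{g-1}))\geq 1$) and generically single-point fibres. Your dimension count, the explicit description of the map $\eta\mapsto\omega$, and the discussion ruling out extra components over higher Brill--Noether strata simply make explicit what the paper's three-line proof leaves implicit.
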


\begin{proof}
Consider the forgetful morphism from both varieties to $\Mbar{g}{1+[g-2]}$. The fibre of a generic point in the image of each is a single point. The image of each is equal to the closure of the locus of $[C,p_1,p_2+\dots+p_{g-1}]$ in $\M{g}{1+[g-2]}$ such that 
$$\dim H^0(\omega_C(-2p_1-p_2-\dots-p_{g-2})\geq 1.$$
\end{proof}

Proposition~\ref{prop:proj} also provides the rank of the rational Chow group of $\PHt$ when $d>g+1$.
\begin{cor}\label{cor:ChowRankd>g+1}
Fix $d>g+1$, then the following relation on the rank of the rational Chow groups holds
$$\rho(\PHt)=\rho(\M{g}{1+[d-2]})+1.$$
\end{cor}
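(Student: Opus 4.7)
The plan is to assemble the statement from two standard facts applied to the projective bundle structure identified in Proposition~\ref{prop:proj}: a projective bundle with connected fibres of positive dimension raises the rational Picard rank by exactly one, and removing a locus of codimension at least two leaves the rational Picard rank unchanged.

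First I would invoke Proposition~\ref{prop:proj} to note that both $X\subset\M{g}{1+[d-2]}$ and $\widetilde{X}\subset\PHt$ have codimension at least two. Since $\M{g}{1+[d-2]}$ is a smooth Deligne-Mumford stack and $\PHt$ is the projectivisation of a locally free sheaf over the open locus in $\M{g}{1+[d-2]}$ where the fibres are equidimensional, excising these codimension-two loci does not affect the rank of the rational Picard group. Hence
$$\rho(\PHt)=\rho(\PHt\setminus\widetilde{X}), \qquad \rho(\M{g}{1+[d-2]})=\rho(\M{g}{1+[d-2]}\setminus X).$$

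Next, Proposition~\ref{prop:proj} gives that $\PHt\setminus\widetilde{X}\to \M{g}{1+[d-2]}\setminus X$ is a projective bundle of relative dimension $d-g-1\geq 1$ (using $d>g+1$). The standard projective bundle formula for rational Chow/Picard groups then yields
$$\rho(\PHt\setminus\widetilde{X})=\rho(\M{g}{1+[d-2]}\setminus X)+1,$$
the additional generator being the tautological class $\OO(1)$. Chaining this with the two excision identities above proves the corollary.

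The only delicate point is the codimension-two excision statement on stacks; this is routine with $\QQ$-coefficients on smooth DM stacks, but it should be made explicit that any non-trivial divisor class relation on $\PHt$ would restrict to a non-trivial relation on the open locus where the projective bundle structure applies. No new geometric input beyond Proposition~\ref{prop:proj} is needed.
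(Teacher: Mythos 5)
Your proposal is correct and follows essentially the same route as the paper: excision of the codimension-two loci $\widetilde{X}$ and $X$ to reduce to the projective bundle $\PHt\setminus\widetilde{X}\to\M{g}{1+[d-2]}\setminus X$, followed by the projective bundle formula for rational Chow groups. The paper's proof is just a terser version of the same argument, citing that the Chow group of the bundle is generated by the tautological class together with pullbacks from the base.
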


\begin{proof}
By the excision sequence as $\codim(\widetilde{X})\geq2$ we obtain
$$A^1_\QQ(\PHt)\cong A^1_\QQ(\PHt\setminus\widetilde{X})$$
and as $\codim(X)\geq2$
$$A^1_\QQ(\M{g}{1+[d-2]})\cong A^1_\QQ(\M{g}{1+[d-2]}\setminus X).$$
Further, the rational Chow group of a projective bundle is generated by the $\OO(-1)$ line bundle class of the projective bundle and the pullback of the Chow group of the base.  Hence Proposition~\ref{prop:proj} gives the result for $d>g+1$. 
\end{proof}

In the case that $d=g+1$, Proposition~\ref{prop:d=g+1} gives bounds on the rank of the rational Chow group.
\begin{cor}\label{cor:ChowRankd=g+1}
Fix $d=g+1$, then the following relation on the rank of the rational Chow groups holds
$$\rho(\M{g}{1+[g-1]}) \leq \rho(\widetilde{\PP\HH}_X^{g,g+1})\leq \rho(\M{g}{1+[g-1]})+1.$$
\end{cor}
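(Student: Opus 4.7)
The plan is to leverage Proposition~\ref{prop:d=g+1} directly. That proposition identifies
$$\pi:\widetilde{\PP\HH}_X^{g,g+1}\longrightarrow \M{g}{1+[g-1]}$$
as a proper birational morphism whose exceptional locus is an irreducible divisor $E$ contracted to the codimension-two locus
$$X:=\{[C,p_1,p_2+\dots+p_g]\ :\ \dim H^0(\omega_C(-2p_1-p_2-\dots-p_g))\geq 1\}.$$
Both bounds on $\rho(\widetilde{\PP\HH}_X^{g,g+1})$ then follow from the standard dictionary between birational geometry and the rational Chow group in codimension one.

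For the lower bound I would argue that $\pi^*$ is injective on $A^1_\QQ$. Since $\pi$ is proper and birational between irreducible varieties, the projection formula gives $\pi_*\pi^*\alpha=\deg(\pi)\cdot\alpha=\alpha$ for every $\alpha\in A^1_\QQ(\M{g}{1+[g-1]})$. Hence $\pi^*\alpha=0$ forces $\alpha=0$, and we obtain the inequality
$$\rho(\M{g}{1+[g-1]})\leq \rho(\widetilde{\PP\HH}_X^{g,g+1}).$$

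For the upper bound I would apply the excision sequence twice. Let $U:=\widetilde{\PP\HH}_X^{g,g+1}\setminus E$, which $\pi$ maps isomorphically onto $\M{g}{1+[g-1]}\setminus X$. The excision sequence for the irreducible divisor $E$ reads
$$\QQ\cdot [E]\longrightarrow A^1_\QQ(\widetilde{\PP\HH}_X^{g,g+1})\longrightarrow A^1_\QQ(U)\longrightarrow 0,$$
so $A^1_\QQ(\widetilde{\PP\HH}_X^{g,g+1})$ is spanned by $[E]$ together with any lift of a generating set of $A^1_\QQ(U)$. Since $\codim(X)\geq 2$, a second application of excision yields the isomorphism $A^1_\QQ(\M{g}{1+[g-1]})\cong A^1_\QQ(\M{g}{1+[g-1]}\setminus X)=A^1_\QQ(U)$. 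Combining these, $A^1_\QQ(\widetilde{\PP\HH}_X^{g,g+1})$ is generated by $\pi^*A^1_\QQ(\M{g}{1+[g-1]})$ and the single class $[E]$, giving the upper bound.

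The main obstacle is not in this corollary itself but in Proposition~\ref{prop:d=g+1}, which supplies both the irreducibility of $E$ and the fact that $\codim(X)\geq 2$. Once those are granted, the argument here is formal. A minor care point is that $\M{g}{1+[g-1]}$ is a stacky quotient, but since we work with rational coefficients the excision and projection-formula arguments go through unchanged.
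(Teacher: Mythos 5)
Your proposal is correct and follows essentially the same route as the paper: excision for the irreducible exceptional divisor $E$ giving $\QQ[E]\to A^1_\QQ(\widetilde{\PP\HH}_X^{g,g+1})\to A^1_\QQ(U)\to 0$, combined with $A^1_\QQ(\M{g}{1+[g-1]})\cong A^1_\QQ(\M{g}{1+[g-1]}\setminus\pi(E))$ since $\codim(\pi(E))=2$. The only cosmetic difference is that you extract the lower bound from injectivity of $\pi^*$ via the projection formula, whereas the paper reads it off directly from the surjectivity of the restriction map in the same exact sequence.
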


\begin{proof}
For $d=g+1$, observe that 
$$A^1_{\QQ}(\widetilde{\PP\HH}_X^{g,g+1}\setminus E)\cong A^1_{\QQ}(\M{g}{1+[g-1]}\setminus \pi(E))\cong A^1_{\QQ}(\M{g}{1+[g-1]})$$
as $\codim(\pi(E))=2$. Hence by excision we obtain the exact sequence
$$\QQ[E]\longrightarrow A^1_\QQ(\widetilde{\PP\HH}_X^{g,g+1})\longrightarrow A^1_\QQ(\M{g}{1+[g-1]})\longrightarrow0.$$
\end{proof}

\section{On the Kodaira dimension of Hurwitz space}\label{sec:kod}

From the last section we observe the following results on the Kodaira dimension of Hurwitz space.

\KodDim*

\begin{proof}
In this case all fibres of
$$\pi:\PHt\longrightarrow\M{g}{1+[d-2]}$$ 
are positive dimensional projective spaces, hence providing rulings. The finite morphism
$$\varphi:\Hdgf\longrightarrow \Hdg$$
implies the rulings push forward and the result also holds for $\Hdg$.
\end{proof}

\begin{restatable}{lem}{KodDimToo}
\label{thm:KodDimToo}
For $d=g+1$, 
$$\Kod(\Hdgf)=\begin{cases}0&\text{ for $g=10$}\\
19& \text{ for $g=11$}\\
\text{maximal}&\text{ for $g\geq12$,}  \end{cases}
\hspace{0.7cm}$$
\end{restatable}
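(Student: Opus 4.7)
The plan is to use Proposition~\ref{prop:d=g+1} to reduce the Kodaira dimension computation for $\Hdgf$ to the known Kodaira dimension of $\M{g}{1+[g-1]}$. Specifically, for $d=g+1$ the open inclusion $\Hdgf\hookrightarrow \widetilde{\PP\HH}_X^{g,g+1}$ followed by the proper birational morphism
$$\pi:\widetilde{\PP\HH}_X^{g,g+1}\longrightarrow \M{g}{1+[g-1]}$$
of Proposition~\ref{prop:d=g+1} exhibits $\Hdgf$ as birational to $\M{g}{1+[g-1]}$. Applying the birational invariance of Kodaira dimension to smooth projective compactifications on either side, this gives
$$\Kod(\Hdgf)=\Kod(\M{g}{1+[g-1]}).$$

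The theorem then follows by substituting the cited computations of the right-hand side. For $g=10$, Barros and the author~\cite{BM} prove $\Kod(\M{10}{1+[9]})=0$; for $g=11$, Farkas and Verra in~\cite{FarkasVerraTheta} prove $\Kod(\M{11}{1+[10]})=19$; and for $g\geq 12$, Farkas and Verra~\cite{FV} prove that $\M{g}{1+[g-1]}$ is of general type, hence has maximal Kodaira dimension. Combining these with the equality above yields Theorem~\ref{thm:KodDimToo} in each case.

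The main (and essentially only) obstacle is verifying that the birational equivalence of these moduli spaces translates to equality of Kodaira dimensions in the classical sense. This follows from the standard fact that any two smooth projective compactifications of birationally equivalent varieties have the same Kodaira dimension: since $\pi$ is birational and proper, any smooth projective compactification of $\Hdgf$ is birational to any smooth projective compactification of $\M{g}{1+[g-1]}$. No new mathematical content is needed beyond Proposition~\ref{prop:d=g+1} and the external computations quoted above; the geometric work has already been done in identifying $\Hdgf$, via the projectivised stratum of exact differentials, as birational to $\M{g}{1+[g-1]}$ precisely when $d=g+1$.
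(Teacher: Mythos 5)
Your overall strategy is exactly the paper's: use Proposition~\ref{prop:d=g+1} to identify $\Hdgf$ (dense in $\widetilde{\PP\HH}_X^{g,g+1}$) birationally with $\M{g}{1+[g-1]}$, then invoke birational invariance of Kodaira dimension and quote the known values. The $g=10$ and $g=11$ cases go through as you say, since the cited results of Barros--Mullane and of Farkas--Verra are stated for $\Mbar{g}{g}/H$ for \emph{every} subgroup $H\le\mathfrak{S}_g$, which includes $H=\mathfrak{S}_{g-1}$, i.e.\ $\Mbar{g}{1+[g-1]}$. (Minor point: you have the two Farkas--Verra references swapped — the $g=11$ computation is in \cite{FV}, while \cite{FarkasVerraTheta} is what is used for $g\ge 12$.)

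The genuine gap is in the $g\geq 12$ case. The result you attribute to Farkas--Verra is not stated for $\M{g}{1+[g-1]}$; what \cite{FarkasVerraTheta} proves is that $\Mbar{g}{[g-1]}$ — the space with only $g-1$ symmetrised points and \emph{no} distinguished point — is of general type. The space you actually need, $\Mbar{g}{1+[g-1]}$, has one extra distinguished marked point, and general type does not formally descend from a quotient or image without an argument. The paper supplies the missing step: the forgetful morphism $\Mbar{g}{1+[g-1]}\longrightarrow \Mbar{g}{[g-1]}$ is a fibration whose fibres are (essentially) the genus $g\ge 2$ curves themselves, hence of general type, over a base of general type; one then invokes additivity of Kodaira dimension for fibrations over a base of general type to conclude that the total space is of general type. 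Without this fibration argument (or a direct reference covering $\Mbar{g}{g}/\mathfrak{S}_{g-1}$), your proof of the $g\ge 12$ case is incomplete.
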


\begin{proof}
The Kodaira dimension is a birational invariant. Hence the first result follows from Proposition~\ref{prop:d=g+1} and the Kodaira dimension of $\Mbar{g}{1+[g-1]}$ due to Barros and the author~\cite{BM} for $g=10$ who showed that 
$$\Kod(\Mbar{10}{10}/H)=0$$
for all subgroups $H$ of $\mathfrak{S}_{10}$. Farkas and Verra~\cite{FV} show that 
$$\Kod(\Mbar{11}{11}/H)=19$$
for all subgroups $H$ of $\mathfrak{S}_{11}$. Further, Farkas and Verra~\cite{FarkasVerraTheta} show $\Kod(\Mbar{g}{[g-1]})$ is maximal for $g\geq 12$. Hence the final result follows from the forgetful morphism 
$$\Mbar{g}{1+[g-1]}\longrightarrow \Mbar{g}{[g-1]}$$
for $g\geq 12$ that maps $\Mbar{g}{1+[g-1]}$ dominantly to a variety of general type with fibres of general type. 

\end{proof}
Note that for $g\geq 12$, the variety $\HH_{g,g+1}^+$ is of general type and provides a finite cover of $\HH_{g,g+1}$, which is of intermediate type with $\Kod(\HH_{g,g+1})=3g-3$. The implications of Proposition~\ref{prop:d=g} to the Kodaira dimension of $\Hdg$ were already known.

\section{On the Picard group of Hurwitz space}\label{sec:PicRank}

\subsection{The admissible covers compactification of $\Hdgf$}
In \S\ref{sec:proj} we constructed $\overline{\PP\HH}_X^{g,d},$ as a compactification of $\Hdgf$. However,  $\Hdgf$ also admits the well-known compacification by \emph{admissible covers} constructed by Harris and Mumford~\cite{HarrisMumford} which we denote $\Hdgfb$. 

An admissible cover 
$$\pi:C\longrightarrow X$$ is a finite morphism of nodal curves of arithmetic genus $g(C)=g$ and $g(X)=0$ such that
\begin{enumerate}
\item
The smooth locus of $C$ is mapped to the smooth locus of $X$,
\item
The nodes of $C$ map to the nodes of $X$,
\item
At each node of $C$ the two branches have the same ramification order,
\item
The smooth locus of $X$ contains $2g+2d-3$ marked unordered branch points and one marked distinguished branch point,
\item
The target curve $X$ marked with the branch points is stable.
\end{enumerate}
The above also provide the admissible covers compactification $\Hdgb$ of $\Hdg$ where in this case (d) is replaced by the condition that the smooth locus of $X$ contains $2g+2d-2$ marked unordered simple branch points. Similarly, it provides the admissible covers compactification $\overline{\HH}_{g,d}^{o\text{ ad}}$ of ${\HH}_{g,d}^o$ the Hurwitz space of simply branched covers with ordered branch points. In this case (d) is replaced by the condition that the smooth locus of $X$ contains $2g+2d-2$ marked ordered simple branch points. This compactification comes with four natural forgetful morphisms.
$$f_{\text{br}}:\Hdgfb\longrightarrow \Mbar{0}{1+[2g+2d-3]}$$
is the dominant morphism that maps an admissible cover to the stabilisation of the target curve. 
$$f_{\text{ram}}:\Hdgfb\longrightarrow \Mbar{g}{1+[2g+2d-3]}$$
is the morphism that maps an admissible cover to the stabilisation of the source curve with the ramification points marked.
$$f_{\text{fib}}:\Hdgfb\longrightarrow \Mbar{g}{1+[d-2]}$$
is the morphism that maps an admissible cover to the stabilisation of the pointed source curve marked by the distinguished simply branched fibre. Finally,
$$\varphi:\Hdgfb\longrightarrow \Hdgb$$
is the natural forgetful morphism of generic degree $2g+2d-2$. Further, the first two forgetful morphisms naturally generalise to $\Hdgb$. 

Unfortunately, the admissible covers compactifications $\Hdgb$ and $\Hdgfb$ are not normal. However, the space of twisted stable maps~\cite{ACV} provide normalisations which we denote $\Hdgbv$ and $\Hdgfbv$ respectively and discuss the local structure of below. By an abuse of notation we denote by $f_{\text{br}},  f_{\text{ram}}, f_{\text{fib}}$ and $\tau$ the composition of the normalisation morphism and the morphisms defined above and similarly also let
$$\varphi:\Hdgfbv\longrightarrow\Hdgbv$$
denote the natural forgetful morphism. The meaning will be clear from context.

Now let $Y$ be the collection of boundary divisors of $\Hdgfbv$ with support contained in the pullback of the boundary of $\Mbar{g}{1+[d-2]}$ under $f_{\text{fib}}$. There is a natural morphism 
$$\tau:\Hdgfbv\setminus Y\longrightarrow\PHt$$ 
that respects the following commutative diagram.
\begin{displaymath}
\begin{tikzcd}
{\Hdgfbv\setminus Y}\ar[r,"\tau"]\ar[dr, "f_{\text{fib}}"']&\PHt\ar[d, "\pi"]\\
&\M{g}{1+[d-2]}&
\end{tikzcd}
\end{displaymath}
As discussed, $\tau$ is defined on a cover $\pi:C\longrightarrow \PP^1$ in the interior $\Hdgf$ by pulling back any differential with a unique double pole at the distinguished branch point and no other zeros or poles. This extends to the boundary as follows. Consider an admissible cover $\pi:\widetilde{C}\longrightarrow \widetilde{X}$ in $\Hdgfbv\setminus Y$. The stable model of the curve $\widetilde{C}$ and the points in the fibre of the marked branch point of $\pi$ is a smooth pointed curve $[C,p_1,p_2+\dots p_{d-1}]\in\M{g}{1+[d-2]}$ and is hence the result of the contraction of rational tails in $\widetilde{C}$.
   
Restricting $\pi$ to the curve $C$ we obtain $\pi|_C:C\longrightarrow X\cong\PP^1$ where due to the possible contraction of components in the source curve we obtain $\deg(\pi|_C)\leq d$ and the branching may be non-simple. Further, the specified branch point in $\widetilde{X}$ has a unique image $x$ in $X$ under the semi-stable reduction of the pointed base curve to $X$. Pulling back the unique differential (up to scaling by $\CC^*$) on $X$ with a double pole at $x$ and no other zeros or poles we obtain the required differential. 

Note that due to the stable reduction process, it may be the case that $\pi|_C(p_i)\ne x$ for some $p_i$. This corresponds to a twisted differential without a pole at $p_i$. This stable reduction process extends to families of admissible covers in $\Hdgfbv\setminus Y$ and there is a unique (up to scaling by $\CC^*$) non-vanishing family of differentials with a unique double pole at the marked branch point and no other zeros or poles on the target curves. Pulling back this family gives a family of non-vanishing differentials on the source curves and a unique family in $\PHt$.

\subsection{The boundary of $\PHt$}
The boundary $\PHt\setminus \Hdgf$ consists of four components corresponding to the four possibilities: two zeros of the differential colliding away from the $p_i$, two zeros of the differential are distinct and distinct from the $p_i$, but have the same image under the branch map obtained by integration, a zero of the differential collides with $p_1$, and two zeros of the differential collide with one of the points $p_2,\dots,p_d$. In this final case, the condition that the residues in the differential are zero necessitates that two simple zeros must collide with one of the double poles. A stable differential in which just one zero collided with one of the points $p_2,\dots,p_d$ would necessarily have a simple pole at that point and hence non-zero residue.

Let $T$ denote the closure of 
$${\PP\HH}_X(-3,\{-2^{d-2}\},2,{\{1^{2g+2d-5}\}})$$ 
in $\PHt$, the locus where two simple zeros collide.  

Let $D$ denote the closure of the locus where two distinct simple zeros have the same image under the morphism to a rational curve obtained by integrating the differential. This is the closure of the locus where the morphism has branching of type $(2,2,1^{d-4})$ at some point away from the distinguished branch point. 

Let $\delta_1$ denote the closure of 
$${\PP\HH}_X(-2,\{-2^{d-2}\},{\{1^{2g+2d-4}\}}),$$ 
the locus where one of the simple zeros in the differential collides with $p_1$. 

Finally, let $\delta_2$ denote the closure of 
$${\PP\HH}_X(-3,0,\{-2^{d-3}\},{\{1^{2g+2d-5}\}}),$$ 
the locus where two simple zeros in the differential collide with a point in $p_2+\dots+p_{d-2}$.

The irreducibility of all four divisors follows from Kluitmann~\cite{Kluitmann} and Natanzon~\cite{Natanzon} who proved that the Hurwitz spaces of covers of rational curves with arbitrary genus source curve and simple branching at all but one branch point are connected.

\subsection{The boundary of $\Hdgfb$ and $\Hdgfbv$}
The boundary of $\Hdgfb$ contains considerably more components. Consider admissible covers of a base containing two irreducible components and a unique node. We denote the ramification type of the cover $\pi:C\longrightarrow X$ above the unique node by $\underline{m}=[m_1,\dots,m_r]$ where $r$ is the number of nodes and $\sum m_i=\deg(\pi)=d$. The pullback under $f_{\text{br}}$ of each boundary divisor in $\Mbar{0}{1+[2g+2d-3]}$ will provide a number of divisors specified by signatures $\underline{m}$. These divisors will also in general be reducible. 

As discussed, $\Hdgfb$ is not normal. The space of twisted stable maps~\cite{ACV} which we denote by $\Hdgfbv$ provides a normalisation. Consider the space $\HH_{g,d}^o$ the Hurwitz space of ordered simple degree $d$ covers of rational curves by genus $g$ curves where all branch points are ordered. Denote by $\overline{\HH}_{g,d}^{o\text{ ad}}$ the admissible covers compactification and by $\overline{\HH}_{g,d}^{o}$ the normalisation, the space of twisted stable maps. We briefly recall the local structure near the boundary of $\overline{\HH}_{g,d}^{o}$ as it allows for a clean presentation. The boundary of $\Hdgfbv$ and $\Hdgbv$ are obtained by quotienting by $S_{2g+2d-3}$ and $S_{2g+2d-2}$ respectively. See~\cite{Ionel} for a more detailed description. 

Consider a general point in the boundary with branch profile above the unique node equal to $\underline{m}=[m_1,\dots,m_r]$. Let $q_i$ be the nodes in the nodal source curve corresponding to the entries $m_i$ sitting above the node $q$ in the rational nodal target curve. Locally near $q_i$ the source curve is given by $x_iy_i=s_i$ and near $q$ the target curve is given by $uv=t$ with $u=x_i^{m_i}$ and $v=y_i^{m_i}$. Introducing local parameters to normalise we obtain that $\overline{\HH}_{g,d}^{o}$ has 
$$\frac{\prod_{i=1}^r m_i}{\text{l.c.m}(\underline{m})}$$
branches along any irreducible component of the boundary of $\overline{\HH}_{g,d}^{o\text{ ad}}$ with signature $\underline{m}$, where l.c.m$(\underline{m})$ is the lowest common multiple of the entries in $\underline{m}$. The ramification along each branch in the branch morphism to $\Mbar{0}{2g+2d-2}$ is equal to $\text{l.c.m}(\underline{m})$.

We now restrict to specify a number of irreducible boundary divisors of $\Hdgfbv$ of interest to us. 

Let $\delta_{[m-1]}=\delta_{1+[n-m]}$ in $\Mbar{0}{1+[n-1]}$ be the divisor defined as the closure of the locus of stable pointed genus zero curves with a separating node with one component containing the specified point and $n-m$ other symmetrised points.

Let $\widetilde{T}_{1+[1]}$ and $\widetilde{T}_{[2]}$ be the divisors in the normalisation $\Hdgfbv$ corresponding to the unique irreducible components of ${f_{\text{br}}}^*\delta_{1+[1]}$ and ${f_{\text{br}}}^*\delta_{[2]}$ respectively, specified by branch profile above the unique node $\underline{m}=(3,1^{d-3})$.

Let $\widetilde{D}_{1+[1]}$ and $\widetilde{D}_{[2]}$ be the divisors in the normalisation $\Hdgfbv$ corresponding to unique irreducible components of ${f_{\text{br}}}^*\delta_{1+[1]}$ and ${f_{\text{br}}}^*\delta_{[2]}$ respectively, specified by branch profile above the unique node $\underline{m}=(2,2,1^{d-4})$.

Let $\widetilde{\Delta}_{1+[1]}$ and $\widetilde{\Delta}_{[2]}$ be the divisors in the normalisation $\Hdgfbv$ corresponding to the unique irreducible components of ${f_{\text{br}}}^*\delta_{1+[1]}$ and ${f_{\text{br}}}^*\delta_{[2]}$ respectively, specified by branch profile above the unique node $\underline{m}=(1^{d})$ and the condition that the source curve of a general element has an non-separating node.

Let $\widetilde{E}_{1+[2]}$ and $\widetilde{E}_{[3]}$ be the divisors in the normalisation $\Hdgfbv$ corresponding to the unique irreducible components of ${f_{\text{br}}}^*\delta_{1+[2]}$ and ${f_{\text{br}}}^*\delta_{[3]}$ respectively, specified by branch profile above the unique node $\underline{m}=(2,1^{d-2})$ and the condition that the source curve of a general element has two non-separating nodes with branch profile $(2,1)$.

Let $\widetilde{\delta}_{1+[1]}$ and $\widetilde{\delta}_{[2]}$ be the divisors in the normalisation $\Hdgfbv$ corresponding to the unique irreducible components of ${f_{\text{br}}}^*\delta_{1+[1]}$ and ${f_{\text{br}}}^*\delta_{[2]}$ respectively, specified by branch profile above the unique node $\underline{m}=(1^{d})$ and the condition that in general, the stable model of the source curve is a smooth genus $g$ curve.

Note that the divisors defined above are irreducible but not necessarily reduced. The irreducibility again follows from Kluitmann~\cite{Kluitmann} and Natanzon~\cite{Natanzon} who proved that the Hurwitz spaces of covers of rational curves with simple branching at all but one branch point are connected. A general admissible cover corresponding to each divisor is depicted in Figure~\ref{Divisors}. 

\begin{figure}[htbp]
\begin{center}
\begin{overpic}[width=0.85\textwidth]{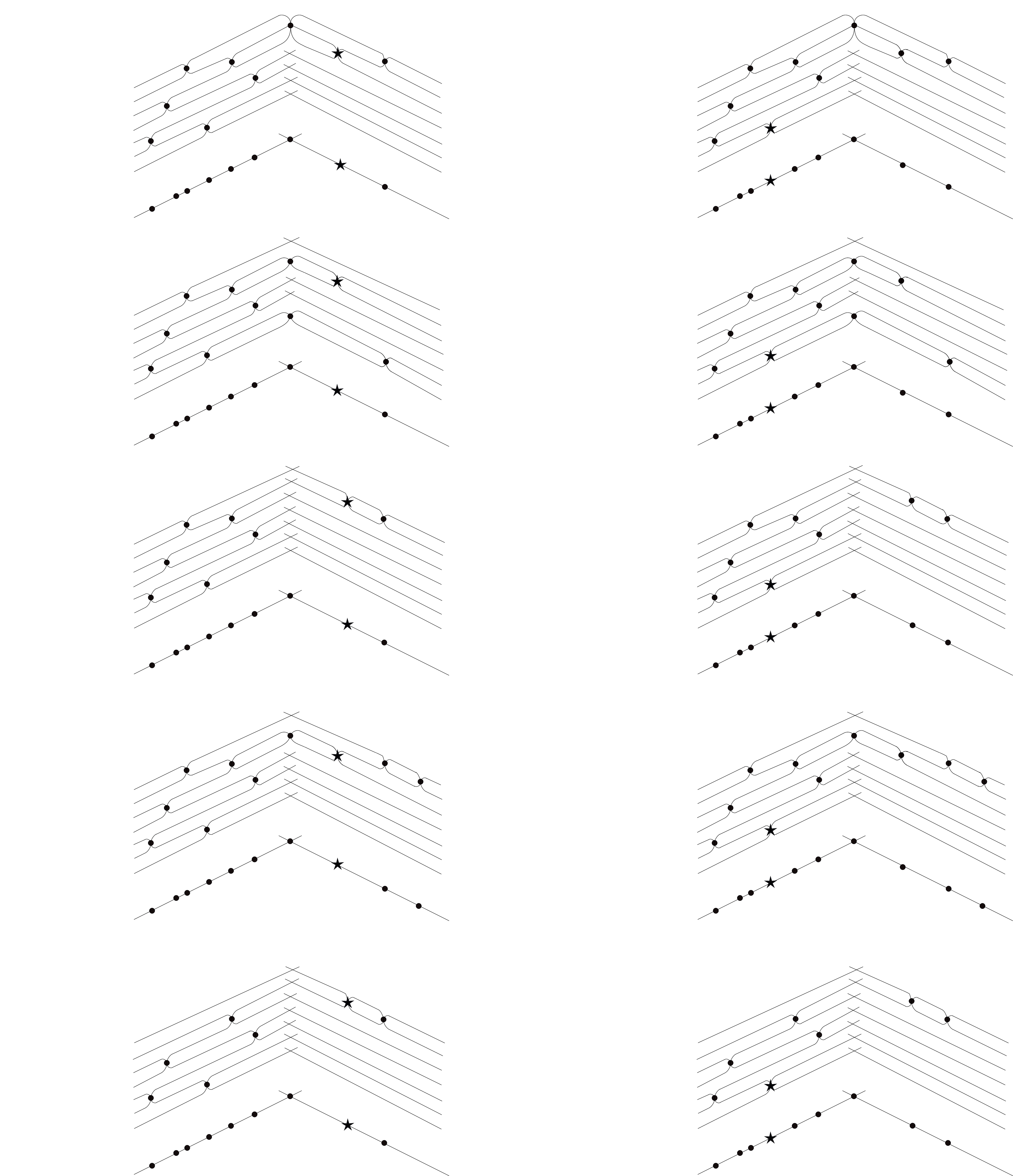}

\put(3,88){$\widetilde{T}_{1+[1]}$}
\put(53,88){$\widetilde{T}_{[2]}$}

\put(3,68){$\widetilde{D}_{1+[1]}$}
\put(53,68){$\widetilde{D}_{[2]}$}

\put(3,48){$\widetilde{\Delta}_{1+[1]}$}
\put(53,48){$\widetilde{\Delta}_{[2]}$}

\put(3,28){$\widetilde{E}_{1+[2]}$}
\put(53,28){$\widetilde{E}_{[3]}$}

\put(3,8){$\widetilde{\delta}_{1+[1]}$}
\put(53,8){$\widetilde{\delta}_{[2]}$}
\end{overpic}
 \caption{Boundary divisors of interest in $\Hdgfbv$ }
  \label{Divisors}
\end{center}
\end{figure}

\subsection{Test curves in $\Hdgfb$ and $\Hdgfbv$}
To prove the linear independence of boundary components we construct a number of test curves. Though some constructions take place in the space of admissible covers we consider the appropriately lift in the normalisation $\Hdgfbv$ scaled by the ramification order of the unique irreducible boundary component the curve lies inside.

For any general points $p_i$ on a general genus $g$ curve $C$ by Riemann-Roch we have for $d>g+1$,
$$\dim H^0(2p_1+p_2+\dots +p_{d-1})=1-g+d\geq 3.$$
Choosing a general linear series 
$$|V|\subset \PP H^0(2p_1+p_2+\dots +p_{d-1})$$
of dimension one not containing $2p_1+p_2+\dots +p_{d-1}$ we can hence construct a curve in $\Hdgfbv$ by constructing the unique cover specified by $2p_1+p_2+\dots +p_{d-1}$ and each element in  $|V|$. Let $F$ denote this curve. When $d=g+1$ we can construct this curve by requiring that the $p_i$ satisfy
$$\dim H^0(\omega_C(-2p_1-p_2-\dots -p_{d-1}))=1.$$

\begin{prop}
For $d\geq g+1$, the curve $F$ satisfies the following intersection numbers
$$F\cdot \widetilde{T}_{[2]}=3(2g+d-2),\hspace{0.3cm}F\cdot\widetilde{D}_{[2]}=2(g^2+2gd+d^2-5d-7g+6),\hspace{0.3cm}F\cdot\widetilde{\delta}_{1+[1]}=1,\hspace{0.3cm}F\cdot\widetilde{\delta}_{[2]}=d-2$$
and the intersection with every other boundary divisor is zero.
\end{prop}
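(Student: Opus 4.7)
The plan is to realize $F$ explicitly as a pencil of divisors in $|D|$, with $D = 2p_1 + p_2 + \cdots + p_{d-1}$, and compute each intersection number by pencil-theoretic enumeration.

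First I would observe that along $F$ both the pointed curve $(C, p_1, p_2 + \cdots + p_{d-1})$ and the fibre $D$ above the distinguished branch point $\infty$ are constant. Hence $f_{\text{fib}} \circ F$ is the constant map to $[C, p_1, p_2 + \cdots + p_{d-1}] \in \M{g}{1+[d-2]} \subset \Mbar{g}{1+[d-2]}$, and $F$ has trivial intersection with every boundary divisor of $\Hdgfbv$ whose $f_{\text{fib}}$-image lies in the boundary of $\Mbar{g}{1+[d-2]}$. This immediately gives $F \cdot \widetilde{\Delta}_{1+[1]} = F \cdot \widetilde{\Delta}_{[2]} = F \cdot \widetilde{E}_{1+[2]} = F \cdot \widetilde{E}_{[3]} = 0$, and rules out any further boundary component whose generic admissible cover has source stable model distinct from $(C, p_1, p_2 + \cdots + p_{d-1})$.

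For $F \cdot \widetilde{T}_{[2]}$ and $F \cdot \widetilde{D}_{[2]}$ I would count $t \in \PP V$ for which $f_t \colon C \to \PP^1$ acquires a triple ramification point off $p_1$, respectively two distinct simple ramification points in the same fibre off $\infty$. Equivalently, these count pencil members of the form $3q + \text{reduced}$ with $q \ne p_1$ (respectively, of the form $2q_1 + 2q_2 + \text{reduced}$ with distinct $q_1, q_2$ both away from $p_1$). A direct analysis of the universal ramification divisor on $C \times \PP V$, together with Riemann--Hurwitz, yields the triple-point count $3(2g + d - 2)$, while the double-nodal count arises from the self-intersection of the universal discriminant in the family with an appropriate subtraction of the triple-ramification contribution, giving $2(g^2 + 2gd + d^2 - 5d - 7g + 6)$. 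The exclusion of $\widetilde{T}_{1+[1]}$ and $\widetilde{D}_{1+[1]}$ follows from a local monodromy analysis: when a simple branch point of $f_t$ collides with the distinguished branch at $\infty$, both transpositions must swap the pair of sheets meeting at $p_1$ in the fibre over $\infty$, so they coincide and $\underline{m} = (1^d)$; such collisions therefore land on $\widetilde{\delta}_{1+[1]}$.

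The intersections with the $\widetilde{\delta}$ divisors come from base points of the pencil. For each $i = 1, \ldots, d-1$ the sections of $H^0(D)$ vanishing at $p_i$ form a hyperplane in $\PP H^0(D)$, and a general line $\PP V$ meets each such hyperplane in a single point $t_i$. At $t = t_i$ the pencil acquires $p_i$ as a base point of order one, and the limit admissible cover has a target bubble attached at $\infty$ when $i = 1$ (base type $\delta_{1+[1]}$) or at a simple branch point when $i \ge 2$ (base type $\delta_{[2]}$); the same local sheet analysis as above confirms $\underline{m} = (1^d)$ and that the stable source remains smooth. As $\text{l.c.m.}(\underline{m}) = 1$, the normalisation multiplicity along each of these divisors is one, and the generic pencil $\PP V$ meets each transversely, giving $F \cdot \widetilde{\delta}_{1+[1]} = 1$ (from $i = 1$) and $F \cdot \widetilde{\delta}_{[2]} = d - 2$ (from $i = 2, \ldots, d-1$).

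The main obstacle I expect is the enumerative computation of $F \cdot \widetilde{D}_{[2]}$: the coefficient $2(g^2 + 2gd + d^2 - 5d - 7g + 6)$ requires a precise double count in the universal discriminant with careful corrections for over-counted higher-order branch coincidences. The triple-point count and the base-point counts are comparatively direct.
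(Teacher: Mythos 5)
Your proposal is correct and follows essentially the same route as the paper: the family $F$ sweeps out the net spanned by $2p_1+p_2+\cdots+p_{d-1}$ and $\PP V$, the intersection with $\widetilde{T}_{[2]}$ is the Pl\"ucker count of inflection points of a general $g^2_d$, namely $(r+1)d+(r+1)r(g-1)$ with $r=2$, the $\widetilde{\delta}$ intersections come from the pencil acquiring a simple base point at some $p_i$ (one hyperplane condition on $\PP V$ for each $i$, landing on $\widetilde{\delta}_{1+[1]}$ for $i=1$ and $\widetilde{\delta}_{[2]}$ for $i\geq 2$), and the vanishing of all other intersections follows since the stable source model is the fixed smooth pointed curve. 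The one divergence is $F\cdot\widetilde{D}_{[2]}$, which you propose to extract from the self-intersection of the discriminant with corrections for higher-order coincidences and rightly flag as the delicate step; the paper instead invokes de Jonqui\`eres' formula directly for the number of divisors of type $(2,2,1^{d-4})$ in a general $g^2_d$, given as the coefficient of $x^2y^{d-4}$ in $(1+4x+y)^g(1+2x+y)^{d-2-g}$, which evaluates to $2(g^2+2gd+d^2-5d-7g+6)$ with no correction terms needed. Your identification of the boundary components hit by each degeneration is actually more detailed than the paper's one-line justification, and is accurate, up to the minor imprecision that for $i\geq 2$ the target bubble is attached at the point where the two escaping branch points collide rather than at a pre-existing branch point.
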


\begin{proof}
The intersection $F\cdot \widetilde{T}_{[2]}$ is obtained as the number of ramification points in a general linear series of projective dimension $r=2$ degree $d$ given by the Pl\"{u}cker formula as
$$(r+1)d+(r+1)r(g-1).$$
The intersection $F\cdot\widetilde{D}_{[2]}$ is obtained by the generalisation of the Pl\"{u}cker formula known as de Jonqui\`{e}res' formula~\cite[pg. 359]{ACGH}. The number of section of the type $(2,2,1^{d-4})$ in a general linear series of projective dimension $r=2$ degree $d$ is given by the coefficient of $x^2y^{d-4}$ in
$$(1+4x+y)^g(1+2x+y)^{d-r-g}.$$
The final two intersections are clear as requiring that a ramification point collide with any $p_i$ specifies a unique section in the linear system.
\end{proof}

Now fix a general degree $d$ cover $\nu:C\longrightarrow \PP^1$ by a genus $g-1$ curve $C$ with only simple branching and a simply branched trigonal cover $\alpha:X\longrightarrow \PP^1$ of the rational line by a rational curve $X$. We obtain an admissible cover in $\Hdgbv$ by gluing together a ramification point from each cover and identifying the other point in the fibre of $\alpha$ to one of the points in the same fibre of $\nu$. By varying one of the remaining branch points in $\alpha$ and fixing the other three we obtain a curve in $\Hdgbv$. By marking a distinct ramification point in $\nu$ or the ramification point in $\alpha$ that is being moved we obtain the curves $G_{[3]}$ and $G_{1+[2]}$ respectively. This situation is depicted in Figure~\ref{Gcurves}.

\begin{figure}[htbp]
\begin{center}
\begin{overpic}[width=0.80\textwidth]{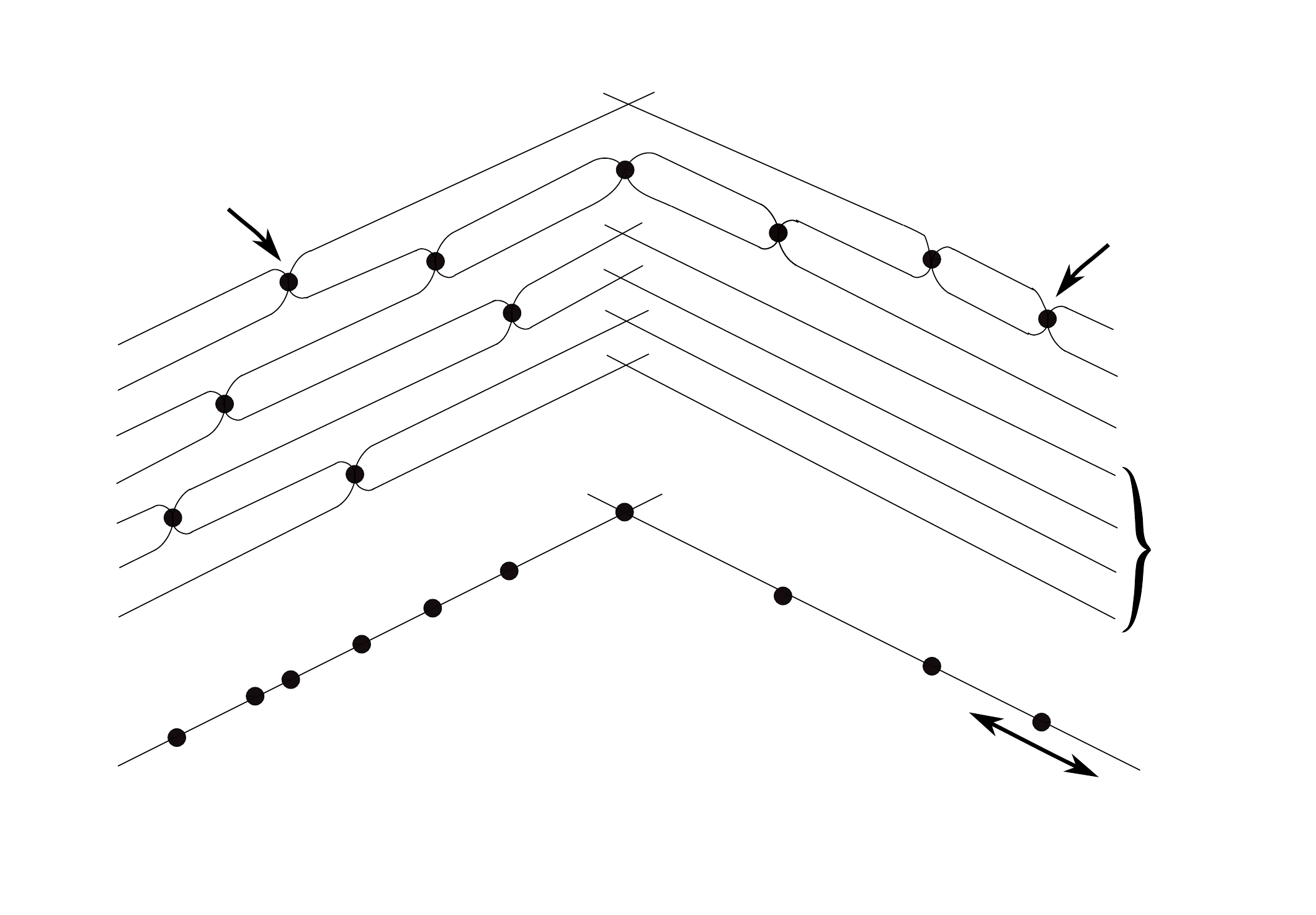}
\put(24,12){$\nu:C\longrightarrow \PP^1$ }
\put(60,58){$\alpha:X\longrightarrow \PP^1$}

\small{\put(85,53){Mark this ramification}
\put(85,50){ point to obtain $G_{1+[2]}$ }}

\small{\put(0,58){Mark this ramification}
\put(0,55){ point to obtain $G_{[3]}$ }}

\small{\put(66,7){Allow this branch point}
\put(69,4){ to move freely }}
\small{\put(87,29){ Unramified}
\put(87,26){ rational tails }}
\end{overpic}
 \caption{Constructing the test curves $G_{[3]}$ and $G_{1+[2]}$. }
  \label{Gcurves}
\end{center}
\end{figure}

\begin{prop}
The curves $G_{[3]}$ and $G_{1+[2]}$ satisfy the following intersection numbers
$$G_{[3]} \cdot\widetilde{T}_{[2]}=9,\hspace{0.7cm}G_{[3]} \cdot\widetilde{\Delta}_{[2]} =3,\hspace{0.7cm}G_{[3]} \cdot\widetilde{E}_{[3]}=-4,  $$
and
$$G_{1+[2]} \cdot\widetilde{T}_{1+[1]}=6,\hspace{0.7cm}G_{1+[2]} \cdot\widetilde{T}_{[2]}=3,\hspace{0.7cm}G_{1+[2]} \cdot\widetilde{\Delta}_{1+[1]} =2,\hspace{0.7cm}G_{1+[2]} \cdot\widetilde{\Delta}_{[2]} =1,\hspace{0.7cm}G_{1+[2]} \cdot\widetilde{E}_{1+[2]}=-4.  $$
The intersection with every other boundary divisor is zero.
\end{prop}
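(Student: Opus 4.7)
The plan is to compute each intersection by tracing the one-parameter family of admissible covers underlying $G_{[3]}$ and $G_{1+[2]}$ as the moving branch point $b$ of $\alpha$ traces out its copy of $\PP^1$ on the target $\alpha$-side.

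The first step is to verify that both test curves sit generically inside $\widetilde{E}_{[3]}$ (resp.\ $\widetilde{E}_{1+[2]}$): throughout the family the target is the fixed two-component curve $\PP^1_\nu \cup \PP^1_\alpha$, the branching profile above the node is $(2,1^{d-2})$ (from the ramified gluing together with the $d-3$ unramified rational tails on the $\alpha$-side), and the source always has exactly the two non-separating nodes between $C$ and $X$ with ramification indices $2$ and $1$. This matches the defining conditions of the $\widetilde{E}$ divisors and reduces $G \cdot \widetilde{E}$ to the normal-bundle calculation discussed last.

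The positive contributions occur at the three points of the parameter $\PP^1$ where $b$ collides with another special point, namely $b_1$, $b_2$, or the node $n$. When $b$ meets $b_i$, a rational bubble pinches off of $\PP^1_\alpha$ carrying $b$ and $b_i$, and the ramification profile above the new node is determined by the product in $S_3$ of the two local monodromies of $\alpha$: either the identity, giving profile $(1^d)$ and a contribution to $\widetilde{\Delta}$, or a $3$-cycle, giving profile $(3,1^{d-3})$ and a contribution to $\widetilde{T}$. When $b$ meets $n$, the stable reduction of the target inserts a bubble at $n$ carrying $b$; Riemann-Hurwitz applied to the degree-$3$ restriction of $\alpha$ to this bubble (with the single simple branch point $b$) forces profile $(3,1^{d-3})$ on the new bubble-to-$\PP^1_\alpha$ node, again contributing to $\widetilde{T}$. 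Each such collision in the normalisation is weighted by the ramification index $\mathrm{lcm}(\underline{m})$ of the branch morphism along the boundary component it meets (hence $3$ for a $\widetilde{T}$-type profile, $1$ for a $\widetilde{\Delta}$-type profile) together with the scaling $\mathrm{lcm}(2,1^{d-2})=2$ of $\widetilde{E}$ that governs the lift of the naive admissible-covers curve to the normalisation, as prescribed in the paragraph preceding the proposition. Bookkeeping these contributions over the three collisions yields the stated $9$ and $3$ for $G_{[3]}$, and the refined split $6+3$, $2+1$ for $G_{1+[2]}$ obtained by tracking whether the marked branch point $b$ ends up on the newly created bubble or on $\PP^1_\alpha$ after each collision. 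The divisors $\widetilde{D}$, $\widetilde{\delta}$, and the mixed subscripts not listed are missed because the pulled-back differential never acquires a double zero in this family, the source never loses either non-separating node, and the marked branch point never leaves its designated target component.

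For the self-intersection $G \cdot \widetilde{E}=-4$, I would use the local product description $\widetilde{E} \cong \Mbar{0}{A} \times \Mbar{0}{B}$ of the generic locus and the standard identity $N_{\delta/\Mbar{0}{n}} = -\psi_{n_A}-\psi_{n_B}$ for the normal bundle of the boundary divisor inside the base. Since the $A$-side is fixed along the test curve, only $\psi_{n_B}$ contributes; on the $B$-side $\Mbar{0}{1+[3]}\cong \PP^1$ this $\psi$-class has degree $2$ by a standard evaluation on $\Mbar{0}{4}$ after passing to the symmetric-group quotient, and multiplying by the ramification factor $2$ of the branch morphism along $\widetilde{E}$ yields $-4$. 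The main obstacle throughout is the delicate bookkeeping at the $b\to n$ collision, which involves a Riemann-Hurwitz argument on the inserted target bubble together with the careful tracking of the attaching data of the $d-3$ unramified rational tails in the stable reduction of both the target and the source.
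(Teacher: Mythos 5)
Your framing of the family is right (the curve lies generically in $\widetilde{E}_{[3]}$, the positive contributions occur where the moving branch point $b$ meets one of the three fixed special points, and the divisor hit is read off from a monodromy product), but there is a genuine gap that makes the numbers unreachable: you treat each of the three collisions as a single event carrying a single monodromy determination, whereas the key point of the paper's proof is that there are exactly $\tfrac{3^2-1}{2}=4$ isomorphism classes of trigonal covers of $\PP^1$ with four given simple branch points (enumerated by tuples of transpositions $\tilde{\gamma}_1\tilde{\gamma}_2\tilde{\gamma}_3\tilde{\gamma}_4=\mathrm{Id}$ in $\mathfrak{S}_3$ up to conjugation), so $G$ is a degree-four multisection of the pencil of positions of $b$. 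Each intersection number is the count, over all $4\times 3=12$ sheet--collision events with multiplicity one, of those events where the relevant product is a $3$-cycle (giving $\widetilde{T}$) versus the identity (giving $\widetilde{\Delta}$); at every one of the three collision points this splits $3:1$, yielding $9$ and $3$. Your scheme of weighting each of three collisions by $\mathrm{lcm}(\underline{m})$ (i.e.\ $3$ for a $\widetilde{T}$-profile, $1$ for a $\widetilde{\Delta}$-profile) times a factor $2$ cannot produce the pair $(9,3)$ for any assignment of types to the three collisions, and no amount of bookkeeping will fix it without the four monodromy classes.

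There is also a concrete error in your treatment of the collision $b\to n$: Riemann--Hurwitz does not force the profile $(3,1^{d-3})$ on the inserted bubble, because the degree-three part of the cover of the bubble need not be connected. For the class with $\tilde{\gamma}_1=\tilde{\gamma}_4$ (class B in the paper's table) the bubble carries a double cover branched at $b$ and at the $(2,1)$-node together with a trivial sheet, giving profile $(1^3)$ over the new node and a contribution to $\widetilde{\Delta}$; this is exactly the one sheet in four that accounts for the third unit of $G_{[3]}\cdot\widetilde{\Delta}_{[2]}=3$. If the node collision always contributed to $\widetilde{T}$ as you claim, one would get $10$ and $2$ under the correct per-sheet count. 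Finally, your normal-bundle computation of $G\cdot\widetilde{E}=-4$ lands on the right number but for the wrong reason: the factor of $4$ comes from the degree of $G$ over its image under $f_{\text{br}}$ (so that $({f_{\text{br}}}_*G_{[3]})\cdot\delta_{[3]}=4\cdot(-1)=-4$ by the projection formula), not from a $\psi$-class of degree $2$ multiplied by a ramification factor $2$.
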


\begin{proof}
Fix 
$$\alpha: X\longrightarrow \PP^1$$
a degree three cover of the rational line by a rational curve simply branched above $b_1,b_2,b_3$ and $b_4$. Removing these points and their pre-images under $\alpha$ we obtain an  \'{e}tale cover. The monodromy representation
$$\rho:\pi_1(\PP^1\setminus\{b_1,b_2,b_3,b_4\})\longrightarrow S_3$$
describes how cycles in $\pi_1(\PP^1\setminus\{b_1,b_2,b_3,b_4\})$ permute the sheets of the \'{e}tale cover 
$$X\setminus\{\alpha^{-1}(b_i)\}_{i=1}^4\longrightarrow \PP^1\setminus\{b_1,b_2,b_3,b_4\}.$$
For the fixed cover $\alpha$, this representation is unique up to simultaneous conjugation (labelling of the sheets). Further, when the image of any representation of $\pi_1(\PP^1\setminus\{b_1,b_2,b_3,b_4\})$ in $S_3$ is a transitive subgroup it gives a unique isomorphism class of an irreducible cover by the Riemann existence theorem.

Let $\gamma_i$ for $i=1,2,3,4$ be the basis for $\pi_1(\PP^1\setminus\{b_1,b_2,b_3,b_4\})$ obtained by taking small cycles enclosing each branch point such that
$$\gamma_1\cdot\gamma_2\cdot\gamma_3\cdot\gamma_4=\text{Id}.$$ 
Then $\rho(\gamma_i)=\tilde{\gamma}_i$ for $i=1,2,3,4$ is a transposition in $\mathfrak{S}_3$. To enumerate the number of such covers we enumerate the number of such monodromy representations up to conjugation. That is, enumerate the choices of transpositions $\tilde{\gamma}_i$ up to conjugation such that 
$$\tilde{\gamma}_1\cdot\tilde{\gamma}_2\cdot\tilde{\gamma}_3\cdot\tilde{\gamma}_4=\text{Id}.$$ 

Fixing $\tilde{\gamma}_1=(1, 2)$ we observe choosing $\tilde{\gamma}_2$ and $\tilde{\gamma}_3$ uniquely specifies $\tilde{\gamma}_4$ which is necessarily an odd permutation and hence in the case of $\mathfrak{S}_3$, a transposition. Eliminating the possibility that all $\tilde{\gamma}_i=(1, 2)$ and conjugating by $(1, 2)$ we obtain the number of such covers to be
$$\frac{3^2-1}{2}=4.$$
A representative of each conjugacy class is given below
\begin{center}
\vspace{0.3cm}
\begin{tabular}{|p{2.8cm} |p{1.5cm}|p{1.5cm}|p{1.5cm}|p{1.5cm}|  }
 \hline
 Conjugacy Class&$\tilde{\gamma}_1$ & $\tilde{\gamma}_2$ & $\tilde{\gamma}_3$&$\tilde{\gamma}_4$  \\
 \hline\hline
 \hspace{1.1cm}A& $(1, 2)$ & $(1, 2)$ & $(1, 3)$&$(1, 3)$  \\
 \hline
 \hspace{1.1cm}B & $(1, 2)$ & $(1, 3)$ &$(1, 3)$& $(1, 2)$  \\
 \hline
 \hspace{1.1cm}C &$(1, 2)$ & $(1, 3)$ &$(2, 3)$ & $(1, 3)$ \\
 \hline
 \hspace{1.1cm}D& $(1, 2)$ & $(1, 3)$ &$(1, 2)$& $(2, 3)$  \\
 \hline
\end{tabular}
\vspace{0.3cm}
\end{center}
All intersection numbers but the last entry in each row of the proposition follow from this table which enumerates the way ramification points collide when branch points collide.

The normal bundles of $\widetilde{E}_{[3]}$ and $\widetilde{E}_{1+[2]}$ can be restricted to $G_{[3]}$ and $G_{1+[2]}$
respectively to obtain the final entry. We give an alternate argument considering the intersection of the pushforward of these curves to $\Mbar{0}{1+[2g+2d-3]}$ and applying the projection formula to the finite morphism ${f_{\text{br}}}$. We have
$$({f_{\text{br}}}_*G_{[3]})\cdot\delta_{[2]}=12\hspace{0.3cm}\text{and}\hspace{0.3cm}({f_{\text{br}}}_*G_{[3]})\cdot\delta_{[3]}=-4,$$
while
$$({f_{\text{br}}}_*G_{1+[2]})\cdot\delta_{[2]}=4,\hspace{0.3cm}({f_{\text{br}}}_*G_{1+[2]})\cdot\delta_{1+[1]}=8,\hspace{0.3cm}({f_{\text{br}}}_*G_{1+[2]})\cdot\delta_{[3]}=0,\hspace{0.3cm}\text{and}\hspace{0.3cm}({f_{\text{br}}}_*G_{1+[2]})\cdot\delta_{1+[2]}=-4.$$
Now observe that $\widetilde{E}_{[3]}$ is the only component of ${f_{\text{br}}}^*\delta_{[3]}$ that the curve $G_{[3]}$ intersects. Similarly, $\widetilde{E}_{1+[2]}$ is the only component of ${f_{\text{br}}}^*\delta_{1+[2]}$ that the curve $G_{1+[2]}$ intersects. Hence the two negative intersection numbers follow by an application of the projection formula.
\end{proof}

Now define the test curve $A_h$ for $h=0,\dots,g-1$ in $\Hdgfbv$ by decorating curves used by Deopurkar and Patel~\cite{DP} with marked ramification points, branch points and one distinguished branch point.

Let 
$$\alpha_b:X_b\longrightarrow \PP^1$$ 
be a family of hyperelliptic curves of genus $g-h-1$ obtained by taking a double cover $X\longrightarrow \PP^1\times\PP^1$ branched along a curve of bi-degree $(2(g-h-1)+2,2).$ To obtain sections $p:B\longrightarrow X$ and $q:B\longrightarrow X$ over $\{0\}\times\PP^1$ and $\{\infty\}\times \PP^1$, let the branch divisor be tangent to $\{0\}\times\PP^1$ and $\{\infty\}\times \PP^1$.

Let 
$$\gamma:Y\longrightarrow \PP^1$$ 
be a generic degree $d-2$ cover by a smooth genus $h$ curve $Y$. Let $y_0$ and $y_\infty$ be fixed points in the unramified fibres $\gamma^{-1}(0)$ and $\gamma^{-1}(\infty)$.

Let 
$$\beta_i:Z_i\longrightarrow \PP^1$$ 
for $i=1,2$ be fixed double covers for rational curves $Z_i$ and let $t_i$ and $\overline{t}_i$ be fixed distinct conjugate points under $\beta_i$ for $i=1,2$.

To obtain the curve $A_h$ in $\Hdgfbv$ identify the sections $p$ with $t_1$ and $q$ with $t_2$ and points $y_0$ with $\overline{t}_1$ and $y_\infty$ with $\overline{t}_2$. Mark the fibre above the image of one of the ramification points of $\gamma:Y\longrightarrow \PP^1$. This construction is depicted in Figure~\ref{AhBh}.

\begin{figure}[htbp]
\begin{center}
\begin{overpic}[width=1.03\textwidth]{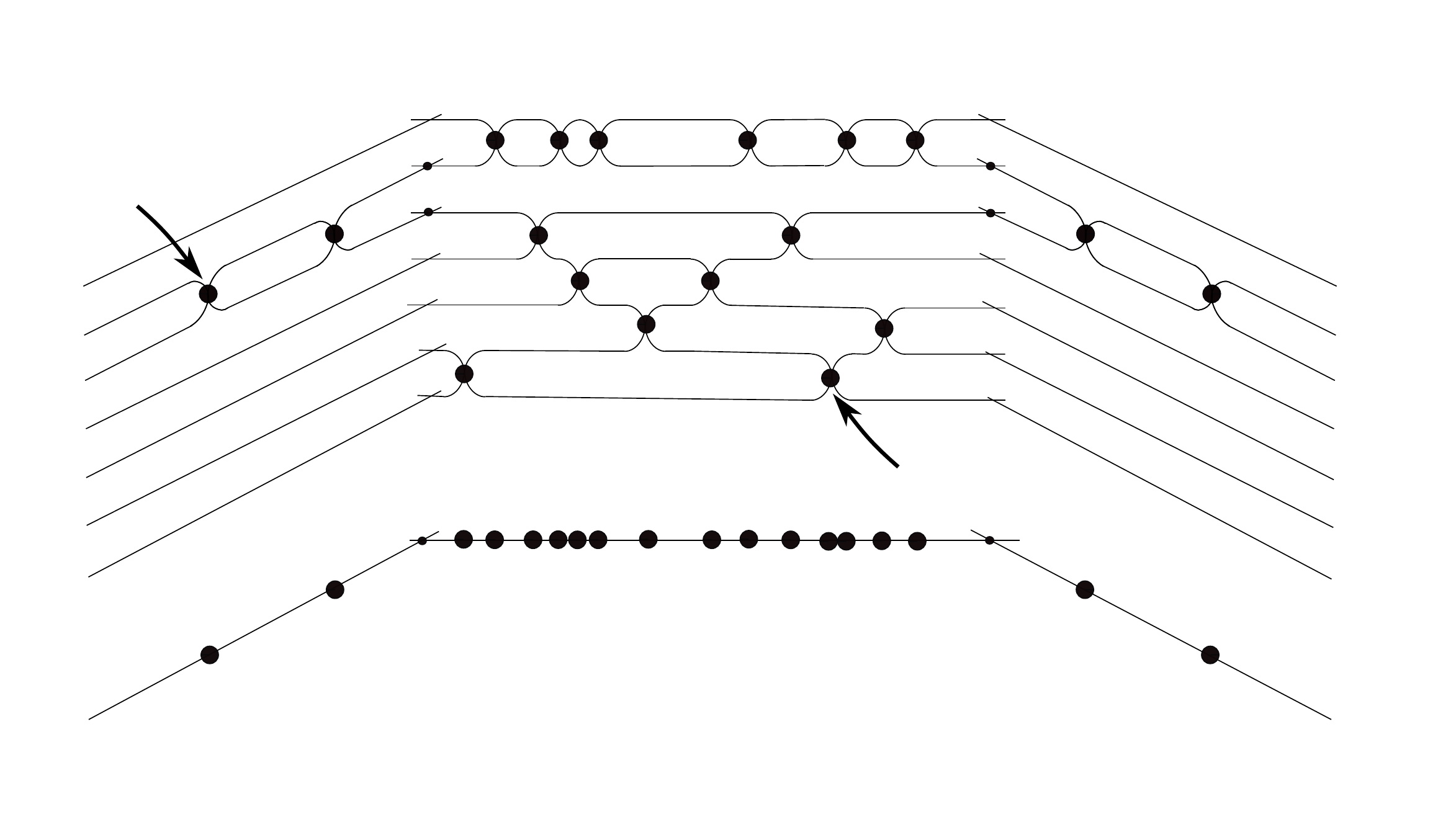}

\put(14,9){$\beta_1:Z_1\longrightarrow \PP^1$ }
\small{\put(10,5){and $d-2$ unramified}
\put(13,2){ rational tails }}

\put(71,9){$\beta_2:Z_2\longrightarrow \PP^1$ }
\small{\put(67,5){and $d-2$ unramified}
\put(70,2){ rational tails }}

\put(40,54){$\alpha_b:X_b\longrightarrow \PP^1$}

\put(40,27){$\gamma:Y\longrightarrow \PP^1$}

\small{\put(63,27){Mark this ramification}
\put(63,24){ point to obtain $A_h$ }}

\small{\put(0,49){Mark this ramification}
\put(0,46){ point to obtain $B_h$ }}

\small{\put(27,47){$t_1$}
\put(27,43.5){$\overline{t}_1$}
\put(30,48){$p$}
\put(30,42){$y_0$}

\put(70.5,47){$t_2$}
\put(70.5,43.5){$\overline{t}_2$}
\put(68.5,48){$q$}
\put(68.5,42){$y_\infty$}

\put(68,18){$\infty$}
\put(29,18){$0$}
}

\end{overpic}
 \caption{Constructing the test curves $A_h$ and $B_h$. }
  \label{AhBh}
\end{center}
\end{figure}

\begin{prop}\label{prop:Ah}
For $h=0,\dots,g-1$, the curve $A_h$ satisfies the following intersection numbers
$$A_h\cdot\widetilde{D}_{1+[1]}=2,\hspace{0.7cm}A_h\cdot\widetilde{D}_{[2]}=4(d+h)-14,\hspace{0.7cm}A_h\cdot\widetilde{E}_{[3]}=2,\hspace{0.7cm}A_h\cdot\widetilde{\Delta}_{[2]}=8(g-h)-8,     $$
and the intersection with every other boundary divisor is zero.
\end{prop}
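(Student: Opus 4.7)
The plan is to compute each intersection number by cataloging the special values of $b\in B\cong\PP^1$ at which the admissible cover defining $A_h$ acquires extra degeneration. Since $\alpha_b:X_b\to\PP^1$ is the only component of the cover that varies with $b$, every intersection point corresponds either to an internal degeneration of $X_b$ or to the sections $p,q$ interacting specially with the gluing.

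First I would analyze the branch curve $D\subset\PP^1\times\PP^1$ of the double cover $X\to\PP^1\times\PP^1$, of bi-degree $(2(g-h-1)+2,2)$ and arithmetic genus $2(g-h)-1$. The two projections $\pi_1:D\to B$ and $\pi_2:D\to P_X$ have degrees $2(g-h)$ and $2$ respectively. A Riemann--Hurwitz computation on $\pi_1$ produces $8(g-h)-4$ ramification points, each corresponding to a value of $b$ at which two branch points of $X_b$ collide at some $v\in P_X$; for $g-h\geq 2$ the resulting $X_b$ then acquires a non-separating node. The two imposed tangencies at $(b_p,0)$ and $(b_q,\infty)$ are instead $\pi_2$-ramification points, and encode the values of $b$ where the sections $p$ or $q$ become ramification points of $X_b$ as $0$ or $\infty$ enters the branch locus.

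Next I would assign each type of event to the appropriate boundary divisor and verify multiplicities locally.
\begin{itemize}
\item $\widetilde{E}_{[3]}$: the two tangency events $b_p,b_q$ each contribute once, since the ramification mismatch at the gluing $p\sim t_1$ (respectively $q\sim t_2$) forces the admissible cover to insert a bubble producing profile $(2,1^{d-2})$ at the new base node and two non-separating source nodes of profile $(2,1)$. This yields $A_h\cdot\widetilde{E}_{[3]}=2$.
\item $\widetilde{\Delta}_{[2]}$: each $\pi_1$-ramification point produces a base bubble with profile $(1^d)$ absorbing the two colliding branch points, and the source acquires a non-separating node. A careful local analysis of the collision position should show that exactly $4$ of the $8(g-h)-4$ events coincide with configurations interacting with the fixed gluing structure and are absorbed into $\widetilde{D}$-type divisors, leaving the net count $8(g-h)-8$.
\item $\widetilde{D}_{[2]}$ and $\widetilde{D}_{1+[1]}$: these count degenerations with profile $(2,2,1^{d-4})$. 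Writing $4(d+h)-14=2\cdot(2d+2h-7)$ suggests a $2$-to-$1$ correspondence with the non-distinguished branch points of $\gamma:Y\to\PP^1$ (of which there are $2d+2h-7$), where the factor $2$ corresponds to the two chains $L_X-L_{Z_i}-L_Y$ through which $X_b$'s branch divisor can interact with a fixed branch point of $\gamma$. The distinguished branch point contributes the remaining $2$ events giving $A_h\cdot\widetilde{D}_{1+[1]}=2$.
\end{itemize}
The absence of intersections with the other boundary divisors listed in Figure~\ref{Divisors} would then follow from the genericity of the fixed data $\nu,\gamma,\beta_1,\beta_2$.

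The main obstacle is the precise combinatorial bookkeeping in the middle step: justifying that exactly $4$ of the $\pi_1$-ramification events get re-routed from $\widetilde{\Delta}_{[2]}$ into $\widetilde{D}$-type loci, and producing the bijective correspondence between non-distinguished branch points of $\gamma$ and the events contributing to $\widetilde{D}_{[2]}$. Both require a careful local analysis of the admissible cover at each degenerate fibre, combining Riemann--Hurwitz on $D$ with the combinatorics of the four-component source $X_b\cup Z_1\cup Z_2\cup Y$ and with the ramification of the normalisation map $\Hdgfbv\to\Hdgfb$ along each boundary stratum.
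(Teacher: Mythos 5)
Your overall framework --- cataloguing the special values of $b$ via the geometry of the branch curve $S\subset\PP^1\times\PP^1$ and Riemann--Hurwitz --- is the same as the paper's, and your treatment of $\widetilde{E}_{[3]}$ (the two tangency events) is correct. But the argument has two genuine defects. The decisive one is $\widetilde{\Delta}_{[2]}$: the step you flag as the main obstacle, namely that exactly $4$ of the $8(g-h)-4$ degenerations of $X_b$ get ``re-routed'' into $\widetilde{D}$-type divisors, is not merely unproven but false. When two simple branch points of the degree-two cover $\alpha_b$ collide, the local monodromy product is trivial, so the profile over the new target node is $(1^d)$ and the source acquires a non-separating node; every one of these $8(g-h)-4$ events therefore lies in $\widetilde{\Delta}_{[2]}$, none can produce a $(2,2,1^{d-4})$ profile, and if any did they would double-count against your own $\widetilde{D}_{[2]}$ tally. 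The real source of the $-4$ is that $A_h$ is \emph{contained} in $\widetilde{\Delta}_{[2]}$: the general member already has two target nodes of profile $(1^d)$ with non-separating source nodes $p\sim t_1$, $y_0\sim\overline{t}_1$, $q\sim t_2$, $y_\infty\sim\overline{t}_2$. So $A_h\cdot\widetilde{\Delta}_{[2]}$ is the degree of $\OO(\widetilde{\Delta}_{[2]})|_{A_h}$, a normal-bundle computation rather than a count of special fibres; the paper evaluates it as the $8(g-h)-4$ contributions from the degenerate fibres of the family $X\to B$ plus the contribution $[p]^2+[q]^2=-4$ of the moving sections at the nodes. A pure point-counting scheme such as yours cannot produce this negative correction.

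The second defect is that your factor of $2$ in $A_h\cdot\widetilde{D}_{[2]}=2(2d+2h-7)$ and in $A_h\cdot\widetilde{D}_{1+[1]}=2$ is reverse-engineered: the ``two chains $L_X-L_{Z_i}-L_Y$'' play no role, since the branch points of $\alpha_b$ and of $\gamma$ lie on the same target component and collide directly there. The correct reason is that the branch curve $S$ has degree two over the target $\PP^1$ of $\alpha_b$ (in the paper's notation $\alpha\cdot[S]=2$), so each fixed branch point of $\gamma$ --- distinguished or not --- is hit by a branch point of $\alpha_b$ for exactly two values of $b$. With that substitution the $\widetilde{D}$ rows become genuine computations, but the $\widetilde{\Delta}_{[2]}$ entry still requires replacing your mechanism by the normal-bundle argument above.
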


\begin{proof}
Denote by $S$ the branching curve in $\PP^1\times \PP^1$ of bi-degree $(2(g-h),2)$. Let $\alpha$ and $\beta$ be the classes of the fibre of the projection onto the first and second component of $\PP^1\times\PP^1$ respectively. Hence $S$ has class $2(g-h)\alpha+2\beta$ and by adjunction, the genus of $S$ is $2(g-h)-1$. Riemann-Hurwitz or the Pl\"{u}cker formula gives that the projection of this curve onto the second factor will have $8(g-h)-4$ simple ramification points. 

The intersections with $\widetilde{D}_{1+[1]}$ occur for $b$ such that a ramification point of $\alpha_b$ sits above the marked ramification point of $\gamma$ which is equal to $\alpha\cdot [S]=2$.

Similarly, the intersections with $\widetilde{D}_{[2]}$ occur for $b$ such that a ramification point of $\alpha_b$ sits above the remaining ramification points of $\gamma$. Hence 
$$A_h\cdot\widetilde{D}_{[2]}=(\alpha\cdot [S])(2h+2(d-2)-3).$$

The intersection with $\widetilde{E}_{[3]}$ is computed as 
$$A_h\cdot \widetilde{E}_{[3]}=([p]+[q])\cdot \ram(\alpha)=2$$

Finally, the intersections with $\widetilde{\Delta}_{[2]}$ is equal to the restriction of the normal bundle of $\widetilde{\Delta}_{[2]}$ to the curve $A_h$. This has two contributions. First, there are $8(g-h)-4$  intersections from the family $\alpha$. The second contribution is at the nodes $[p]$ and $[q]$ that contribute 
$$[p]^2+[q]^2=-4$$
to the intersection.

Again, all these intersection numbers can be checked via pushing forward the curve under the branch morphism to $\Mbar{0}{1+[2g+2d-3]}$.
\end{proof}

The curve $B_h$ for $h=0,\dots,g-1$ in $\Hdgfbv$ is obtained by following the same construction as $A_h$, but distinguishing a different ramification point. In this case we distinguish a ramification point of $\beta_1:Z_1\longrightarrow \PP^1$. This is depicted in Figure~\ref{AhBh}.

\begin{prop}
For $h=0,\dots,g-1$, the curve $B_h$ satisfies the following intersection numbers
$$B_h\cdot\widetilde{D}_{[2]}=4(d+h)-12,\hspace{0.7cm}B_h\cdot\widetilde{E}_{1+[2]}=1,\hspace{0.7cm}B_h\cdot\widetilde{E}_{[3]}=1,\hspace{0.7cm}B_h\cdot\widetilde{\Delta}_{1+[1]}=-2,
\hspace{0.7cm}B_h\cdot\widetilde{\Delta}_{[2]}=8(g-h)-6,   $$
and the intersection with every other boundary divisor is zero.
\end{prop}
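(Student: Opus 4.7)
My plan is to follow Proposition~\ref{prop:Ah} almost verbatim, since $B_h$ differs from $A_h$ only by moving the distinguished branch point from a ramification point of $\gamma$ on the central target component to a branch point of $\beta_1$ on the side component. The moving family $\alpha_b$, the branching curve $S$ of bi-degree $(2(g-h),2)$, the sections $p,q$ and all local geometry are unchanged. What changes is only the label ($[2]$ versus $1+[1]$, and $[3]$ versus $1+[2]$) that each geometric contribution is attributed to, depending on whether the relevant target node separates off the distinguished branch point.

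For $\widetilde{D}_{[2]}$ I would count coincidences of a branch point of $\alpha_b$ with a ramification point of $\gamma$ as $b$ varies. In $B_h$ no ramification point of $\gamma$ is marked, so all $2h+2d-6$ of them contribute, each with multiplicity $\alpha\cdot[S]=2$, producing $4(d+h)-12$. The class $\widetilde{D}_{1+[1]}$ would require a branch point of $\alpha_b$ to meet a branch point of $\beta_1$, but these lie on different target components and so this intersection vanishes. For $\widetilde{E}_{[3]}$ and $\widetilde{E}_{1+[2]}$ I would reuse the analysis from Proposition~\ref{prop:Ah} in which each of the persistent nodes $[p]$ and $[q]$ contributes $1$. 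In $B_h$, the node $[p]$ lies above the target node separating off the marked component $\PP^1_{\beta_1}$, so its contribution is now recorded in $\widetilde{E}_{1+[2]}$, while $[q]$ still contributes to $\widetilde{E}_{[3]}$.

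For the $\widetilde{\Delta}$ classes I would reuse the decomposition into $8(g-h)-4$ transverse degenerations of the family $\alpha_b$ plus the two persistent node contributions $[p]^2+[q]^2=-4$ with $[p]^2=[q]^2=-2$. The $8(g-h)-4$ transverse degenerations all sit away from the distinguished branch point and so fall in $\widetilde{\Delta}_{[2]}$, while the $[p]^2=-2$ term is relabelled to $\widetilde{\Delta}_{1+[1]}$ because $[p]$ sits above the separating node that now carries the distinguished branch point. This gives $B_h\cdot\widetilde{\Delta}_{[2]}=8(g-h)-6$ and $B_h\cdot\widetilde{\Delta}_{1+[1]}=-2$. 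Vanishing of all other irreducible boundary intersections follows as in the $A_h$ case, since the family contains no mechanism producing a triple branching or any of the more degenerate profiles listed in Figure~\ref{Divisors}.

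The main obstacle is ensuring that each local contribution is attributed to the correct irreducible boundary component after relocating the distinguished branch point; there is no new geometric input, only a careful matching of local structure to divisor type. I would confirm the bookkeeping by pushing $B_h$ forward under $f_{\text{br}}$ to $\Mbar{0}{1+[2g+2d-3]}$, computing $({f_{\text{br}}}_\ast B_h)\cdot\delta_{[2]}$, $({f_{\text{br}}}_\ast B_h)\cdot\delta_{1+[1]}$, $({f_{\text{br}}}_\ast B_h)\cdot\delta_{[3]}$ and $({f_{\text{br}}}_\ast B_h)\cdot\delta_{1+[2]}$ directly from the genera and degrees of the constituent covers, then applying the projection formula together with the ramification multiplicities $\text{l.c.m}(\underline{m})$ along each boundary branch recorded earlier in the section. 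The resulting linear system pins down the four claimed intersection numbers without further input.
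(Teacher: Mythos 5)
Your proposal is correct and is exactly the paper's approach: the paper's entire proof is the single sentence that the computation follows from the proof of Proposition~\ref{prop:Ah} by distinguishing a different branch point, which is precisely the relabelling argument you carry out (and your detailed re-attribution of the $[p]$-node contributions to $\widetilde{E}_{1+[2]}$ and $\widetilde{\Delta}_{1+[1]}$, the count $2(2h+2d-6)=4(d+h)-12$ for $\widetilde{D}_{[2]}$, and the pushforward check under $f_{\text{br}}$ all match the paper's computations for $A_h$). No discrepancies.
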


\begin{proof}
The proof follows clearly from the proof of Proposition~\ref{prop:Ah} by simply distinguishing a different branch point.
\end{proof}

\subsection{The Picard rank conjecture}
Patel~\cite{Patel} showed that the irreducible components of the boundary of $\Hdgbv$ are linearly independent. Though this is not known to hold on $\Hdgfbv$, the following weaker statement on linear independence will suffice for our purposes. 

\begin{prop}\label{prop:zero}
For $d>g$, if the relation
$$\sum c(\delta)\delta=0$$
holds in $A^1_\QQ(\Hdgfbv)$ on the irreducible boundary components of $\Hdgfbv$ then
$$c(\widetilde{T}_{1+[1]})=c(\widetilde{T}_{[2]})=c(\widetilde{D}_{1+[1]})=c(\widetilde{D}_{[2]})= c(\widetilde{\delta}_{1+[1]})=c(\widetilde{\delta}_{[2]})=c(\widetilde{\Delta}_{1+[1]})=c(\widetilde{\Delta}_{[2]})=c(\widetilde{E}_{1+[2]})=c(\widetilde{E}_{[3]})=0.$$
\end{prop}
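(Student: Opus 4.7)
The plan is to combine Patel's theorem on the independence of the boundary of $\Hdgbv$ with the finite forgetful morphism $\varphi:\Hdgfbv\to\Hdgbv$ and the test curves $F$, $G_{[3]}$, $G_{1+[2]}$, $A_h$, $B_h$ whose intersection numbers with the ten listed divisors were just computed.

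First I would push forward the assumed relation under $\varphi$. For each letter $X\in\{T,D,\Delta,E,\delta\}$, only the two listed divisors $\widetilde{X}_{1+[k]}$ and $\widetilde{X}_{[k+1]}$ map dominantly onto a single boundary divisor $X\subset\Hdgbv$ of the same branching profile $\underline{m}$, the distinction recording whether the distinguished branch point lies on the rational bridge carrying $\underline{m}$ or on the main component. Writing $\alpha_X,\beta_X$ for the degrees of $\varphi$ restricted to these two divisors --- determined combinatorially from the number of branch points on each side of the node, together with the local factor $\prod m_i/\mathrm{lcm}(\underline{m})$ recorded earlier in this section --- Patel's theorem applied to $\varphi_*\bigl(\sum c(\delta)\delta\bigr)=0$ yields five linear relations
$$\alpha_X\,c(\widetilde{X}_{1+[\bullet]})+\beta_X\,c(\widetilde{X}_{[\bullet+1]})=0,\qquad X\in\{T,D,\Delta,E,\delta\}.$$

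Next I would intersect the relation against $F$, $G_{[3]}$, $G_{1+[2]}$ and the families $\{A_h\}_{h=0}^{g-1}$, $\{B_h\}_{h=0}^{g-1}$. By the intersection propositions above, each of these curves meets no other irreducible boundary divisors of $\Hdgfbv$, so the resulting equations live purely in the ten coefficients. The curves $F$, $G_{[3]}$ and $G_{1+[2]}$ contribute one equation each, while varying $h$ in $A_h$ and in $B_h$ yields one constant and one $h$-linear equation per family, giving two equations apiece. In total this furnishes roughly twelve linear equations in the ten unknowns.

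I would then solve the resulting system in a natural order: the $h$-linear parts of the $A_h$ and $B_h$ intersection equations immediately fix the ratio $c(\widetilde{D}_{[2]}):c(\widetilde{\Delta}_{[2]})$; the five pushforward relations reduce each $c(\widetilde{X}_{1+[\bullet]})$ to a scalar multiple of $c(\widetilde{X}_{[\bullet+1]})$; and the remaining equations from $F$, $G_{[3]}$, $G_{1+[2]}$ and the constant parts of $A_h,B_h$ then close the system down to the zero solution. The main obstacle is the precise determination of the multiplicities $\alpha_X,\beta_X$, which depends on the local structure of the normalisation $\Hdgfbv\to\Hdgfb$ and on the combinatorial count of ways to distinguish a branch point on either side of the boundary; once these are pinned down, the remaining verification reduces to a direct rank check on the explicit matrix assembled from the intersection numbers above.
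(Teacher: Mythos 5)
Your proposal follows essentially the same route as the paper: push the relation forward under $\varphi$ and invoke Patel's independence theorem to pair each coefficient $c(\widetilde{X}_{1+[\bullet]})$ with $c(\widetilde{X}_{[\bullet+1]})$, then intersect against the test curves to cut the system down to zero, finishing with a rank check on the resulting matrix. The paper assembles exactly ten equations (the five pushforward relations plus intersections with $A_0$, $B_0$, $A_1$, $G_{[3]}$ and $F$) into an explicit $10\times 10$ non-singular matrix, whereas you propose a slightly redundant system including $G_{1+[2]}$ and both the constant and $h$-linear parts of the $A_h$ and $B_h$ families, but this is an immaterial difference in the choice of test curves rather than in method.
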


\begin{proof}
We prove the existence of the following non-singular matrix relating the coefficients of the above divisors in any such relation
$$M=\begin{pmatrix}
2&b-2&0&0&0&0&0&0&0&0\\
0&0&2&b-2&0&0&0&0&0&0\\
0&0&0&0&2&b-2&0&0&0&0\\
0&0&0&0&0&0&2&b-2&0&0\\
0&0&0&0&0&0&0&0&3&b-3\\
0&0&2&4 d - 14&0&0&0&8 g - 8&0&2\\
0&0&0&4 d - 12&0&0&-2&8 g - 6&1&1\\
0&0&2&4 d - 10&0&0&0&8 g - 16&0&2\\
0&9&0&0&0&0&0&3&0&-4\\
0&3 (2 g + d - 2)&0&2 (g^2 + 2 g d + d^2 - 5 d - 7 g + 6)&1&d - 2&0&0&0&0
\end{pmatrix}$$
where $b=2g+2d-2$ and observe
$$\det(M)=-2304 g (g+d-1) (2g+2d-5)\ne 0.$$

Patel~\cite{Patel} showed that the irreducible components of the boundary of $\Hdgbv$ are linearly independent. Consider the finite morphism
$$\varphi:\Hdgfbv\longrightarrow \Hdgbv$$
that forgets the specified branch point. 
Pushing forward the relation under $\varphi$ we obtain the first five rows of $M$ from the coefficients of irreducible boundary divisors in a relation in $A^1_\QQ(\Hdgbv)$ that must be trivial. 
The remaining four rows are obtained via intersection of the relation with the curves $A_0,B_0, A_1, G_{[3]}$ and $F$. 
\end{proof}

This allows us to show that in the case $d>g$ the boundary components of $\PHt$ are linearly independent.

\begin{prop}\label{prop:bdrylinindep}
For $d>g$ and $g\geq 3$, the components $T$, $D$, $\delta_1$ and $\delta_2$ of $\PHt\setminus\Hdgf$ are linearly independent in $A^1_\QQ(\PHt)$. 
\end{prop}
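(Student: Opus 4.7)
The strategy is to pull back any hypothetical relation on $\PHt$ via the birational morphism $\tau$ to $\Hdgfbv$ and then invoke Proposition~\ref{prop:zero} to force the coefficients to vanish. Suppose $aT + bD + c_1\delta_1 + c_2\delta_2 = 0$ in $A^1_\QQ(\PHt)$. Since $\tau: \Hdgfbv \setminus Y \to \PHt$ is birational, pullback yields the same relation on $A^1_\QQ(\Hdgfbv \setminus Y)$. The excision sequence
$$\bigoplus_{E \subset Y} \QQ[E] \longrightarrow A^1_\QQ(\Hdgfbv) \longrightarrow A^1_\QQ(\Hdgfbv \setminus Y) \longrightarrow 0$$
then produces coefficients $\alpha_E \in \QQ$ so that
$$\tau^*(aT + bD + c_1\delta_1 + c_2\delta_2) + \sum_{E \subset Y} \alpha_E E = 0$$
in $A^1_\QQ(\Hdgfbv)$.

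The key step is to determine $\tau^*T, \tau^*D, \tau^*\delta_1, \tau^*\delta_2$ in terms of the irreducible boundary divisors of $\Hdgfbv \setminus Y$. Using the explicit description of $\tau$ from \S\ref{sec:proj} together with the classification of the monodromy product of two colliding simple transpositions (either a $3$-cycle, a pair of disjoint transpositions, or the identity), I expect the identifications
$$\tau^*T = k_T\,\widetilde{T}_{[2]},\quad \tau^*D = k_D\,\widetilde{D}_{[2]},\quad \tau^*\delta_1 = k_1\,\widetilde{\delta}_{1+[1]},\quad \tau^*\delta_2 = k_2\,\widetilde{\delta}_{[2]},$$
with positive integer multiplicities $k_T, k_D, k_1, k_2$ recording the local ramification of the branch morphism along each divisor. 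The four divisors on the right all have smooth stable source curve and hence are not in $Y$. By contrast, the companion divisors $\widetilde{T}_{1+[1]}$ and $\widetilde{D}_{1+[1]}$ (where the resulting rational tail carries $p_1$ together with another marked point), along with $\widetilde{\Delta}_{1+[1]}, \widetilde{\Delta}_{[2]}, \widetilde{E}_{1+[2]}, \widetilde{E}_{[3]}$ (where the source acquires a non-separating node), all lie in $Y$ and can only contribute through the $\alpha_E E$ terms.

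As a consequence, the coefficients of $\widetilde{T}_{[2]}, \widetilde{D}_{[2]}, \widetilde{\delta}_{1+[1]}, \widetilde{\delta}_{[2]}$ in the full relation on $\Hdgfbv$ are exactly $ak_T, bk_D, c_1k_1, c_2k_2$ respectively, with no contribution from the $Y$-supported terms. Since $d > g$, Proposition~\ref{prop:zero} applies and forces all four of these coefficients to vanish; positivity of the $k_\bullet$ then yields $a = b = c_1 = c_2 = 0$.

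The main obstacle is the local identification of the pullbacks. Concretely, one must carry out the admissible-covers/twisted-differentials correspondence at each boundary stratum and verify (i) that $\tau^*T, \tau^*D, \tau^*\delta_1, \tau^*\delta_2$ are supported only on the four claimed divisors outside $Y$, with no stray components contributing, and (ii) that the attached multiplicities $k_\bullet$ are nonzero. This reduces to checking, for each of the three possible monodromy products of two colliding simple transpositions, how the contraction of the resulting source components and the pullback of the target differential with double pole at the stabilised distinguished branch point match the signature degenerations $(-3,-2^{d-2},2,1^{2g+2d-5})$, $(-3,-2^{d-2},1^{2g+2d-3})$ with branching $(2,2,1^{d-4})$, $(-2,-2^{d-2},1^{2g+2d-4})$, and $(-3,0,-2^{d-3},1^{2g+2d-5})$ defining $T, D, \delta_1, \delta_2$ in $\PHt$.
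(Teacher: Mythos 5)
Your argument is correct and is essentially the paper's own proof: the paper likewise pulls back a hypothetical relation along $\tau$, identifies $\widetilde{T}_{[2]}$, $\widetilde{D}_{[2]}$, $\widetilde{\delta}_{1+[1]}$ and $\widetilde{\delta}_{[2]}$ as the boundary divisors of $\Hdgfbv\setminus Y$ mapping onto $T$, $D$, $\delta_1$, $\delta_2$, and then invokes Proposition~\ref{prop:zero}. The excision step and the positivity of the multiplicities $k_\bullet$ that you spell out are left implicit in the paper's two-line proof, but they are exactly the details needed to make it precise.
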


\begin{proof}
Recall the birational morphism 
$$\tau:\Hdgfbv\setminus Y\longrightarrow\PHt$$ 
where $Y$ is the pullback of the boundary of $\Mbar{g}{1+[d-2]}$ under $f_{\text{fib}}$. 
The divisors $\widetilde{T}, \widetilde{D}_{[2]}, \widetilde{\delta}_{[1]}$ and $\widetilde{\delta}_{[2]}$ have image $T$, $D$, $\delta_1$ and $\delta_2$ respectively in $\PHt$. 

Any non-trivial relation in $T$, $D$, $\delta_1$ and $\delta_2$ in $A^1_\QQ(\PHt)$ pulls back under ${\tau}$ to a relation in the boundary components of $\Hdgfbv\setminus Y$ with a non-zero coefficients for at least one of $\widetilde{T}, \widetilde{D}_{[2]}, \widetilde{\delta}_{[1]}$ and $\widetilde{\delta}_{[2]}$ in $A^1_{\QQ}(\Hdgfbv\setminus Y)$. Proposition~\ref{prop:zero} then provides the contradiction.
\end{proof}

This provides the first proposition that contributes to Theorem~\ref{thm:PicRank}.
\begin{prop}
$\Pic_{\QQ}(\Hdgf)$ and $\Pic_{\QQ}(\Hdg)$ for $d>g$ are trivial.
\end{prop}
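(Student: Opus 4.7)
The plan is to compute $\rho(\Hdgf)$ by excision on the open embedding $\Hdgf \hookrightarrow \PHt$, bounding $\rho(\PHt)$ from above and matching this upper bound against the span of the four irreducible boundary components $T$, $D$, $\delta_1$, $\delta_2$. Excision yields the right-exact sequence
\[
\QQ[T]\oplus\QQ[D]\oplus\QQ[\delta_1]\oplus\QQ[\delta_2] \longrightarrow A^1_\QQ(\PHt) \longrightarrow A^1_\QQ(\Hdgf) \longrightarrow 0,
\]
so $\rho(\Hdgf) = \rho(\PHt) - r$, where $r$ is the rank of the image of the leftmost map. Proposition~\ref{prop:bdrylinindep} immediately supplies $r = 4$.

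The task then reduces to showing $\rho(\PHt) \leq 4$. For $d > g+1$ I would apply Corollary~\ref{cor:ChowRankd>g+1} to get the equality $\rho(\PHt) = \rho(\M{g}{1+[d-2]}) + 1$, and for $d = g+1$ Corollary~\ref{cor:ChowRankd=g+1} gives the upper bound $\rho(\PHt) \leq \rho(\M{g}{1+[g-1]}) + 1$, which is all that is needed. To compute $\rho(\M{g}{1+[d-2]})$, I would invoke the Arbarello--Cornalba description: for $g \geq 3$ the rational Picard group $\Pic_\QQ(\M{g}{n})$ is freely generated by the Hodge class $\lambda$ and the cotangent classes $\psi_1, \ldots, \psi_n$. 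Passing to $\mathfrak{S}_{d-2}$-invariants, only $\lambda$, $\psi_1$, and $\psi_2 + \cdots + \psi_{d-1}$ survive, so $\rho(\M{g}{1+[d-2]}) = 3$ and therefore $\rho(\PHt) \leq 4$.

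Combining these bounds yields $\rho(\Hdgf) \leq \rho(\PHt) - 4 \leq 0$, so $\Pic_\QQ(\Hdgf) = 0$ for $d > g$ (with $\rho(\PHt) = 4$ as a byproduct). To descend the result to $\Hdg$ I would use the natural finite forgetful morphism $\varphi : \Hdgf \to \Hdg$: since $\tfrac{1}{\deg\varphi}\varphi_* \circ \varphi^*$ acts as the identity on rational Chow groups, $\varphi^*$ induces an injection $\Pic_\QQ(\Hdg) \hookrightarrow \Pic_\QQ(\Hdgf)$, whence $\Pic_\QQ(\Hdg) = 0$ as well. The substantive obstacle in this chain is not the present bookkeeping step but rather Proposition~\ref{prop:bdrylinindep}, whose proof in turn relies on the test-curve calculations $F, G_{[3]}, G_{1+[2]}, A_h, B_h$ and on pushing forward along $\varphi$ to exploit Patel's linear independence of boundary divisors on $\Hdgbv$.
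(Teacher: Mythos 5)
Your proposal is correct and follows essentially the same route as the paper: bound $\rho(\PHt)$ via Corollaries~\ref{cor:ChowRankd>g+1} and~\ref{cor:ChowRankd=g+1}, match it against the four linearly independent boundary components from Proposition~\ref{prop:bdrylinindep} via excision, and descend to $\Hdg$ using $\varphi_*\circ\varphi^* = \deg(\varphi)\cdot\mathrm{id}$. You are in fact slightly more explicit than the paper, which leaves the computation $\rho(\M{g}{1+[d-2]})=3$ (via the Arbarello--Cornalba generators $\lambda,\psi_1,\psi_2+\cdots+\psi_{d-1}$) implicit in the phrase ``completely accounted for,'' and which also records the small point that normality of $\Hdgf$ gives the injection $\Pic_\QQ(\Hdgf)\hookrightarrow A^1_\QQ(\Hdgf)$.
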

\begin{proof}
For $d>g$, the rank of the rational Chow group of $\PHt$ given by Corollary~\ref{cor:ChowRankd>g+1} and Corollary~\ref{cor:ChowRankd=g+1} is completely accounted for by the linearly independent boundary components given by Proposition~\ref{prop:bdrylinindep}. Hence $A^1_\QQ(\Hdgf)=0$ and as $\Hdgf$ is normal, $\Pic_{\QQ}(\Hdgf)\longrightarrow A^1_\QQ(\Hdgf)$ is injective and hence $\Pic_{\QQ}(\Hdgf)$ is trivial.

The forgetful morphism 
$$\varphi: \Hdgf\longrightarrow \Hdg$$
is finite and proper and hence 
$$ \varphi_*\circ\varphi^*:A^1_\QQ(\Hdg)\longrightarrow A^1_\QQ(\Hdg) $$ 
is equal to multiplication by $(2g+2d-2)$. However, $A^1_\QQ(\Hdgf)=0$ and hence $\varphi^*[Z]=0$ for any $[Z]\in A^1_{\QQ}(\Hdg)$. Hence
$$0=\varphi_*\circ\varphi^*[Z]=(2g+2d-2)[Z]$$
and $A^1_{\QQ}(\Hdg)=0$ which implies $\Pic_{\QQ}(\Hdg)=0$. 
\end{proof}

In the remainder of this section we prove the one remaining case of Theorem~\ref{thm:PicRank} that $\Pic_\QQ(\Hdg)$ is trivial for $d=g$. In this case the map
$$\PHt\longrightarrow \Mbar{g}{1+[d-2]}$$
is no longer dominant and we must find a new strategy to bound the rational Chow rank. For any smooth genus $g$, degree $g$ cover of the rational line
$$\pi:C\longrightarrow \PP^1$$
the line bundle $\omega_C\otimes\pi^*\OO(-1)$ has degree $g-2$ and by Riemann-Roch the dimension is equal to
$$\dim H^0(C,\omega_C\otimes\pi^*\OO(-1))=\dim H^0(C,\pi^*\OO(1))-1.$$
Let $\mathcal{C}_{g,g-2}$ be the universal degree $g-2$ symmetric product, the moduli spaces of tuples of $g-2$ unordered possibly non-distinct points on smooth genus $g$ curves. We obtain a birational map
\begin{equation}\label{eq:HHgg}
\begin{array}{rcl}
\HH_{g,g}&\dashedrightarrow&\mathcal{C}_{g,g-2}\\
\left[\pi:C\to\PP^1\right]&\mapsto&\left[C,\omega_C\otimes\pi^*\OO(-1)\right]\\ 
\end{array}
\end{equation}
that is well-defined when
$$\dim H^0(C,\pi^*\OO(1))=2$$
and has a well defined inverse at $[C,p_1+\dots+p_{g-2}]$ in $\mathcal{C}_{g,g-2}$ when
$$\dim H^0(C,\OO_C(p_1+\dots+p_{g-2}))=1$$ 
and the linear system $|A|$ for $A=\omega_C(-p_1-\dots-p_{g-2})$ has no base points and the map $|A|$ has simple branching. 

\begin{figure}[htbp]
\begin{center}
\begin{overpic}[width=1.0\textwidth]{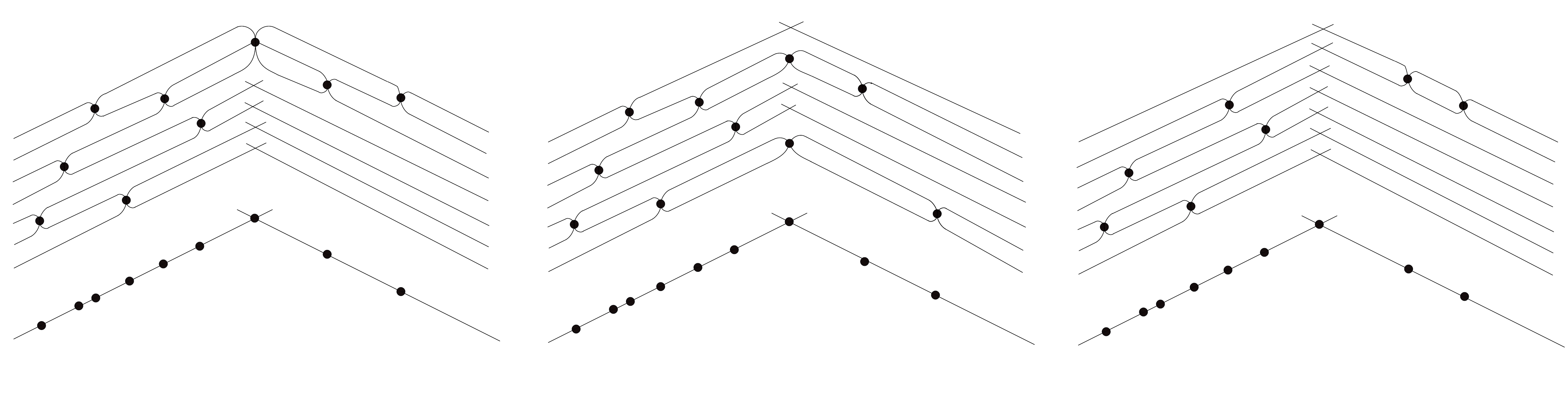}
\put(84,3){$\delta$}
\put(15,3){$T$}
\put(50,3){$D$}
\end{overpic}
 \caption{Boundary divisors of interest in $\Hdgb$ }
  \label{Divisors2}
\end{center}
\end{figure}

Let $Y$ be the collection of all boundary divisors in $\overline{\HH}_{g,g}$ excluding only the three irreducible components of $f_{\text{br}}^*\delta_{[2]}$ such that the stable model of the general source curve is smooth. We denote these divisors by $T, {D}$ and $\delta$ and they are specified as the irreducible components of $f_{\text{br}}^*\delta_{[2]}$ corresponding to $\underline{m}=[3,1^{d-3}],[2,2,1^{d-4}]$ and $[1^d]$ respectively. A general cover in each divisor is depicted in Figure~\ref{Divisors2}. Then $\overline{\HH}_{g,g}\setminus Y$ provides a partial compactification of $\HH_{g,g}$ by admissible covers with the conditions that the stable model of the source curve is smooth and at most two branch points come together as under these conditions the space of admissible covers and twisted stable maps coincide. Further, the map $(\ref{eq:HHgg})$ extends to a birational morphism
$$\begin{array}{rcl}
\overline{\HH}_{g,g}\setminus Y&\dashedrightarrow&\mathcal{C}_{g,g-2}\\
\left[\pi:C\to\PP^1\right]&\mapsto&\left[C,\omega_C\otimes\pi^*\OO(-1)\right]\\ 
\end{array}$$
that is well-defined when
$$\dim H^0(C,\pi^*\OO(1))=2$$
hence 
$$\dim H^0(C,\omega_C\otimes\pi^*\OO(-1)=1$$
and has a well defined inverse at $[C,p_1+\dots+p_{g-2}]$ in $\mathcal{C}_{g,g-2}$ when
$$\dim H^0(C,\OO_C(p_1+\dots+p_{g-2}))=1$$ 
and the resulting cover from the linear system $|\omega_C(-p_1-\dots-p_{g-2})|$ has branching that is the result of at most two branch points coming together which we detail explicitly below. 

Let $A=\omega_C(-p_1-\dots-p_{g-2})$ and $\dim H^0(C,A)=2$. If $|A|$ is base point free and the branching of the map $|A|$ is simple or of type $(3,1^{d-3})$ or $(2,2,1^{d-4})$ then $[C,p_1+\dots+p_{g-2}]$ gives a well-defined inverse. If $|A|$ is not base point free and the base points of $|A|$ are given by $q_1,\dots,q_m$ then to obtain a well-defined inverse we require 
\begin{enumerate}
\item
the base points are simple,
\item
the branching of $|A(-q_1-\dots-q_m)|$ is simple, of type $(3,1^{d-m-3})$ or $(2,2,1^{d-m-4})$,
\item
the $q_i$ sit in distinct fibres of $|A(-q_1-\dots-q_m)|$ which are unbranched.
\end{enumerate}
In both cases the requirements ensure that the admissible cover can be obtained by attaching rational tails in a unique way specified by Figure~\ref{Divisors2}. The unique way to attach rational tails at ramification of type $(3,1^{d-3})$ or $(3,1^{d-m-3})$ is depicted in $T$, ramification of type $(2,2,1^{d-4})$ or $(2,2,1^{d-m-4})$ is depicted in $D$ and rational tails are attached at simple base points in unramified fibres in the unique way specified in $\delta$. This construction clearly extends to families.

Let $\widetilde{X}$ be the closure in $\overline{\HH}_{g,g}$ of the locus of covers $\{\pi:C\longrightarrow \PP^1\}$ in ${\HH}_{g,g}$ such that 
$$\dim H^0(C,\pi^*\OO(1))\geq3,$$
let $X_1$ be the locus of $[C,p_1+\dots+p_{g-2}]$ in $\mathcal{C}_{g,g-2}$ such that
$$\dim H^0(C,\OO_C(p_1+\dots+p_{g-2}))\geq2$$
and let $X_2$ be the locus where 
$$\dim H^0(C,\OO_C(p_1+\dots+p_{g-2}))=1$$
but $A=\omega_C(-p_1-\dots-p_{g-2})$ does not satisfy the requirements above.  We have 
$$\codim(\widetilde{X})=\codim(X_1)=\codim(X_2)=2.$$
The first two codimensions follow from a simple dimension count, while the third is because $X_2$ is the image of the divisorial locus where more than two branch points come together and the stable model of the source curve is smooth. This locus is contracted as the information of the position of the colliding branch points is forgotten.

This results in the following relation on the rank of rational Chow groups.
\begin{prop}
The following relation on the rank of the rational Chow groups holds
$$\rho(\overline{\HH}_{g,g}\setminus Y)=\rho( \mathcal{C}_{g,g-2})=3.$$  
\end{prop}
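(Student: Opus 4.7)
The plan is to apply excision on both sides to strip away the codimension-two indeterminacy loci, identify the resulting open subvarieties via the rational map of equation~(\ref{eq:HHgg}), and then reduce the second equality to the known Picard rank of the universal symmetric product.

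First I would apply excision. Since $\codim(\widetilde{X})=2$ in $\Hdgbv\setminus Y$, we get
$$A^1_\QQ(\Hdgbv\setminus Y)\cong A^1_\QQ\bigl((\Hdgbv\setminus Y)\setminus\widetilde{X}\bigr),$$
and since $\codim(X_1)=\codim(X_2)=2$ in $\mathcal{C}_{g,g-2}$,
$$A^1_\QQ(\mathcal{C}_{g,g-2})\cong A^1_\QQ\bigl(\mathcal{C}_{g,g-2}\setminus(X_1\cup X_2)\bigr).$$
The discussion preceding the statement has been carefully arranged so that the rational map $[\pi:C\to\PP^1]\mapsto[C,K_C\otimes\pi^*\OO(-1)]$ extends to a regular isomorphism
$$(\Hdgbv\setminus Y)\setminus\widetilde{X}\ \xrightarrow{\ \sim\ }\ \mathcal{C}_{g,g-2}\setminus(X_1\cup X_2),$$
with conditions (a)--(c) precisely encoding what is needed to reconstruct the admissible cover uniquely from the data $[C,p_1+\dots+p_{g-2}]$. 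This yields the first equality $\rho(\Hdgbv\setminus Y)=\rho(\mathcal{C}_{g,g-2})$.

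For the second equality $\rho(\mathcal{C}_{g,g-2})=3$, I would appeal to the known Picard rank of the universal degree $g-2$ symmetric product over $\mathcal{M}_g$: since $\rho(\mathcal{M}_g)=1$ by Harer, the base contributes a single Hodge class, while the generic fibre $\Sym^{g-2}C$ contributes two further independent classes---expressible as a theta-type pullback from the universal Jacobian under an Abel-Jacobi morphism, and a diagonal class along the locus where two of the unordered points collide---giving the required total rank of three.

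The main obstacle is the biregularity assertion in the first step: one must verify that on the stated open sets every admissible cover in $\Hdgbv\setminus(Y\cup\widetilde{X})$ arises uniquely from some $[C,p_1+\dots+p_{g-2}]$ satisfying (a)--(c), and conversely that the inverse construction---attaching rational tails at base points and at ramification profiles of types $(3,1^{d-3})$ and $(2,2,1^{d-4})$ as in Figure~\ref{Divisors2}---always produces the correct cover with no hidden ambiguity. Once this combinatorial check is in hand, both equalities follow formally from excision and the cited rank computation.
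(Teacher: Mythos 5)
Your proposal is correct and follows essentially the same route as the paper: excision across the codimension-two loci $\widetilde{X}$, $X_1$, $X_2$, followed by the identification $\Hdgbv\setminus\{Y\cup\widetilde{X}\}\cong\mathcal{C}_{g,g-2}\setminus\{X_1\cup X_2\}$ established in the discussion preceding the statement. Your explicit accounting for $\rho(\mathcal{C}_{g,g-2})=3$ (the Hodge class from the base plus a theta-type class and the diagonal from the fibres) is more detail than the paper itself supplies and is a valid choice of generators.
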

\begin{proof}
From the above discussion we obtain
$${\HH}_{g,g}\setminus\{Y\cup\widetilde{X}\}\cong \mathcal{C}_{g,g-2}\setminus \{X_1\cup X_2\}.$$
As $\codim(\widetilde{X})=\codim(X_1)=\codim(X_2)=2$ we obtain the result.
\end{proof}
This is the final ingredient needed to complete the remaining case.
\begin{prop}
$\Pic_{\QQ}(\Hdg)$ for $d=g$ is trivial.
\end{prop}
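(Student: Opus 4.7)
The plan is to finish by combining the rational Chow rank computation just established with Patel's linear independence of the boundary components of $\Hdgbv$ and the excision exact sequence.

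First I would apply excision to the open embedding $\Hdg \hookrightarrow \Hdgbv\setminus Y$, whose complement consists of the three irreducible boundary divisors $T$, $D$, $\delta$. This yields the right exact sequence
$$\QQ\langle T\rangle \oplus \QQ\langle D\rangle \oplus \QQ\langle \delta\rangle \longrightarrow A^1_\QQ(\Hdgbv\setminus Y) \longrightarrow A^1_\QQ(\Hdg) \longrightarrow 0.$$
The preceding proposition gives $\rho(\Hdgbv\setminus Y)=3$, so it suffices to show that the classes $T$, $D$, $\delta$ are linearly independent in $A^1_\QQ(\Hdgbv\setminus Y)$: that forces the first map to be surjective and hence $A^1_\QQ(\Hdg)=0$.

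Linear independence of $T$, $D$, $\delta$ in the partial compactification follows from Patel's theorem applied to the full compactification. Indeed, suppose $aT+bD+c\delta=0$ in $A^1_\QQ(\Hdgbv\setminus Y)$. By excision applied to the inclusion $\Hdgbv\setminus Y \hookrightarrow \Hdgbv$, this relation lifts to a relation $aT+bD+c\delta+\sum_i e_i Y_i = 0$ in $A^1_\QQ(\Hdgbv)$, where the $Y_i$ range over the irreducible components of $Y$. Patel's result that the irreducible boundary components of $\Hdgbv$ are linearly independent then forces all coefficients, including $a,b,c$, to vanish.

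Finally, since $\Hdg$ is normal, the cycle class map $\Pic_\QQ(\Hdg) \to A^1_\QQ(\Hdg)$ is injective, so the vanishing of $A^1_\QQ(\Hdg)$ implies $\Pic_\QQ(\Hdg)=0$. No step looks substantially difficult once the previous proposition on the rank of $A^1_\QQ(\Hdgbv\setminus Y)$ is in hand; the only thing to verify carefully is the lifting of a boundary relation from the open to the full compactification, which is the standard excision argument.
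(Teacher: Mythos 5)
Your proposal is correct and follows essentially the same route as the paper: the paper's proof is just the terse statement that the three boundary components of $\Hdgbv\setminus Y$ are linearly independent by Patel, hence $A^1_\QQ(\Hdg)=0$, together with the rank computation of the preceding proposition. You have merely made explicit the two excision sequences and the lifting of the boundary relation to the full compactification, all of which is the intended (and correct) argument.
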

\begin{proof}
The three boundary components of $\overline{\HH}_{g,g}\setminus Y$ are linearly independent by Patel~\cite{Patel}. Hence $A^1_\QQ({\HH}_{g,g})\cong\Pic_{\QQ}({\HH}_{g,g})=0$.
\end{proof}
 This provides the final case of Theorem~\ref{thm:PicRank}. 

\bibliographystyle{plain}
\bibliography{base}
\end{document}